\definecolor{darkgreen}{rgb}{0.0, 0.7, 0.0}
\definecolor{cyan}{cmyk}{1,0,0,0}
\definecolor{gb}{RGB}{0,100,180}
\definecolor{dg}{RGB}{0,100,0}
\newcommand{\bdg}{\begin{dg}}
\newcommand{\edg}{\end{dg}}
\newcommand{\jchange}[1]{{#1}}
\newcommand{\mc}[1]{{#1}}
\newcommand{\jc}[1]{{#1}}
\newcommand{\jcc}[1]{{#1}}
\newcommand{\joc}[1]{{#1}}
\newtheorem{theorem}{Theorem}[section]
\newtheorem*{theorem*}{Theorem}
\newtheorem{lemma}[theorem]{Lemma}
\newtheorem{proposition}[theorem]{Proposition}
\newtheorem{remark}[theorem]{Remark}
\newtheorem{corollary}[theorem]{Corollary}
\newtheorem{??}[theorem]{Question}
\newtheorem{definition}[theorem]{Definition}
\newtheorem{notation}[theorem]{Notation}
\font\tenmsb=msbm10
\font\sevenmsb=msbm7
\font\fivemsb=msbm5
\def\Bbb#1{{\fam\msbfam #1}}
\font\teneufm=eufm10
\font\seveneufm=eufm7
\font\fiveeufm=eufm5
\newcommand{\im}{ \hbox{\rm Im} }
\newcommand\comp{{\Bbb C}}
\newcommand\zed{{\Bbb Z}}
\newcommand{\ov}[1]{\overline{#1}}
\newcommand{\ms}[1]{\mathscr{#1}}
\newcommand{\ha}{\cA}
\newcommand{\sn}{S_{\ven}}
\newcommand{\snode}{S_{\ven}^{\times}}
\newcommand{{\socle}}{\rm Socle}
\newcommand{\ve}[1]{\underline{#1}}
\newcommand{\mn}{\cM_{n}}
\newcommand{\nn}{\cN_{n}^{d}}
\newcommand{\hn}{h_{n}}
\newcommand{\an}{\ensuremath{\cA_n}}
\newcommand{\hf}{\phi}
\newcommand{\can}{K_C}
\newcommand{\ven}{\ve{n}}
\newcommand{\dn}{d_{n}}
\newcommand{\EE}{\mathbbm{E}}
\newcommand{\QQ}{\mathbbm{Q}}
\newcommand{\cC}{\mathcal{C}}
\newcommand{\rE}{\mathrm{E}}
\newcommand{\rV}{\mathrm{V}}
\newcommand{\tensor}{\otimes}
\newcommand{\map}[1]{\stackrel{#1}{\longrightarrow}}
\newcommand{\un}[1]{\ensuremath{\protect\underline{#1}}}
\def\GL{\textrm{GL}}
\def\gl{\mathfrak{gl}}
\DeclareMathOperator{\Coh}{Coh}
\DeclareMathOperator{\Pic}{Pic}
\DeclareMathOperator{\Bun}{Bun_G(C)}
\DeclareMathOperator{\Gr}{Gr}
\DeclareMathOperator{\Higgs}{Higgs}
\DeclareMathOperator{\In}{In}
\DeclareMathOperator{\Nspan}{Nspan}
\DeclareMathOperator{\Flat}{Flat}
\DeclareMathOperator{\cHom}{\mathcal{H}om}
\DeclareMathOperator{\cEnd}{\mathcal{E}nd}
\DeclareMathOperator{\Isom}{Isom}
\DeclareMathOperator{\Spec}{Spec}
\DeclareMathOperator{\Ext}{Ext}
\DeclareMathOperator{\Aut}{Aut}
\DeclareMathOperator{\End}{End}
\DeclareMathOperator{\Sym}{Sym}
\DeclareMathOperator{\Lie}{Lie}
\DeclareMathOperator{\ad}{ad}
\DeclareMathOperator{\rank}{rank}
\DeclareMathOperator{\codim}{codim}
\DeclareMathOperator{\Trace}{trace}
\DeclareMathOperator{\Stab}{Stab}
\DeclareMathOperator{\Ad}{Ad}
\DeclareMathOperator{\mult}{mult}
\DeclareMathOperator{\tp}{top}
\def\proj{\mathrm{proj}}
\DeclareMathOperator{\RHL}{RHL}
\DeclareMathOperator{\act}{act}
\DeclareMathOperator{\aff}{aff}
\DeclareMathOperator{\car}{\mathbf{car}}
\DeclareMathOperator{\reg}{reg}
\DeclareMathOperator{\red}{red}
\DeclareMathOperator{\nod}{\times}
\def\univ{\textrm{\tiny univ}}
\def\1halb{\frac{1}{2}}
\def\tto{\twoheadrightarrow}
\def\sxymat{\xymatrix@C=1.5ex@R=0.8ex}
\def\grp{$\xymatrix{ R\times_{X}R  \ar[r]^-{\mu} & R \ar@<1ex>[r]^-{s}\ar@<-1ex>[r]_-{t} & X}$}
\def\dar{\ar@<-0.5ex>[r]\ar@<0.5ex>[r]}
\def\tar{\ar[r]\ar@<1ex>[r]\ar@<-1ex>[r]}
\newcommand{\dmap}[2]{\ar@<-0.5ex>[r]_-{#2}\ar@<0.5ex>[r]^-{#1}}
\newcommand{\dotarrow}[2]{\xymatrix{{#1}\ar@{..>}[r]&{#2}}}
\def\cart{\ar@{}[dr]|{\square}}
\def\cA{\mathcal{A}}
\def\cC{\mathcal{C}}
\def\cE{\mathcal{E}}
\def\cF{\mathcal{F}}
\def\cH{\mathcal{H}}
\def\cI{\mathcal{I}}
\def\cL{\ensuremath{\mathcal{L}}}
\def\cM{\mathcal{M}}
\def\cN{\mathcal{N}}
\def\cO{\mathcal{O}}
\def\cP{\mathcal{P}}
\def\cU{\mathcal{U}}
\def\cg{\mathfrak{g}}
\def\bA{{\mathbbm A}}
\def\bC{{\mathbbm C}}
\def\bE{{\mathbbm E}}
\def\bG{{\mathbbm G}}
\def\bL{{\mathbbm L}}
\def\bP{{\mathbbm P}}
\def\bQ{{\mathbbm Q}}
\def\bR{{\mathbbm R}}
\def\bV{{\mathbbm V}}
\def\bZ{{\mathbbm Z}}
\DeclareMathOperator{\dmult}{dmult}
\DeclareMathOperator{\dshift}{dshift}
\DeclareMathOperator{\nodes}{Nodes}
\DeclareMathOperator{\IC}{IC}
\title{A support theorem for the Hitchin fibration:\\ the case of $\GL_n$ and $\can$}
\author{Mark Andrea A.\ de Cataldo, Jochen Heinloth and Luca Migliorini}
\begin{document}

\begin{abstract}
	We compute the supports of the perverse cohomology sheaves of the Hitchin fibration for $\GL_n$ over the locus of reduced spectral curves. In contrast to the case of meromorphic Higgs fields we find additional supports at the loci of reducible spectral curves. Their contribution to the global cohomology is governed by a finite twist of Hitchin fibrations for Levi subgroups. The corresponding summands give non-trivial contributions to the cohomology of the moduli spaces for every $n \geq\jc{2}$. A key ingredient is a restriction result for intersection cohomology sheaves that allows us to compare the fibration to the one defined over  versal deformations of spectral curves.
\end{abstract}
\maketitle

\tableofcontents

\section{Introduction}\label{sec:Intro}

In the study of the geometry of the Hitchin fibration a recurring problem has been to determine how much of this geometry is determined by the smooth part of the fibration. Ng\^ o's support theorem provides a tool to formulate and sometimes to prove a precise version of this question for general fibrations equipped with an action of a family of polarized abelian group schemes (see \cite{ngoaf}). In particular, for variants of the fibration parameterizing Higgs bundles with poles,  Chaudouard and Laumon in \cite{ch-la} proved that the only perverse cohomology sheaves appearing in the decomposition of the direct image of the constant sheaf are the intermediate extensions of the local systems on the smooth locus, that is the perverse cohomology sheaves are supported over the whole base. In particular, all of the cohomology is determined, in principle, by the monodromy of the cohomology of the smooth fibers. As is explained in the last section of \cite{ch-la},
unfortunately, this method does not apply to the original symplectic  (no poles) version of the Hitchin fibration.
Motivated by the $P=W$ conjecture \cite{dCHM}, one would like to understand the perverse filtration of the fibration better and for this it is important to determine whether this result extends to this case as well. Surprisingly, we do find new supports \jc{as well as } new cohomological contributions for any rank $\jc{n\geq 2}$. 

Before we explain the general strategy of our approach, let us  state our main result. In order to do this, let us briefly introduce the standard notation that we use, which is recalled in more detail in \cref{sec:NotationAndBackground}. We fix a smooth projective curve $C$ and denote by $\hn\colon  \mn^d \to \an$ the Hitchin fibration for $\GL_n$ and  an integer $d$ coprime to $n$, i.e., $\mn^d$ is the moduli space of semistable Higgs bundles of rank $n$ and degree $d$ on $C$. The base $\cA_n$ is an affine space parameterizing spectral curves $C_a\in T^*C$ that are of degree $n$ over $C$.
For any partition $\un{n}=(n_i)_{i=1,\dots,{r}}$ of $n$ there is the closed subvariety $S_{\un{n}}\subset \cA_n$,  closure of the subset $S_{\un{n}}^\times\subset \sn$ of reducible nodal curves having smooth irreducible components of degree $n_i$ over $C$ (see {\cref{spcv}}). Also we denote by $\an^{\red}\subset \an$ the open subset parameterizing reduced spectral curves. 

{The decomposition theorem implies that the complex $\bR{\hn}_{*} \QQ$ is a direct sum of its perverse cohomology sheaves  $^p\!\!{\mathscr H}^{r}(\bR{\hn}_{*}\QQ)$, which in turn are direct sums of irreducible perverse sheaves. These summands are thus supported on closed subvarieties of $\an$ and the subvarieties that occur in this way are called the supports of  $\bR{\hn}_* \QQ$.}
Using these notions our main results can be summarized as follows (note that according to our convention in \S\ref{sec:NotationAndBackground}, the local systems given by the $r$-th cohomology of 
the smooth fibers of $\hn$ contribute to $^p\!\!{\mathscr H}^{r}(\bR{\hn}_{*}\QQ)${, the $r$-th perverse cohomology sheaf}):

\begin{theorem*}[\Cref{prop:SuppOnlySn} and \Cref{thm:main1}]
Let $\hn:\mn^d \to \an$ be the Hitchin map.
{The supports of  $\bR{\hn}_{*}\QQ$ on $\an$  that meet the reduced locus $\an^{\red}$ are exactly the strata $S_{\un{n}}$.}

Moreover, for every partition $\ven$ of $n$, the stratum $\sn$ is a support for all of the sheaves
	\begin{equation*}\label{eq:range}
	^p\!\!{\mathscr H}^{{k}}(\bR  {\hn}_{*}\QQ) 
	\hbox{ with } \delta^{\aff}(\ven) \leq {k} \leq {2\dim \an -\delta^{\aff}(\ven)} 
	\end{equation*}
	where $\delta^{\aff}(\ven)=\sum_{i<j}n_in_j(2g-2)-{r}+1$ is the dimension of the affine parts of the Picard group of { the spectral curves defined by points of $\sn^\times$}. The corresponding perverse summands are the intermediate extensions of local systems on $\sn^\times$ whose stalks can be described explicitly in terms of the cohomology of the spectral curve and its dual graph.
\end{theorem*}
A more precise statement describing the local systems appearing in the above statement appears in \Cref{thm:main1}, and refined information on the monodromy is given in \Cref{cor:rank_loc_syst}. In particular, it turns out that the local systems corresponding to partitions with pairwise distinct $n_i$ and $k$ maximal have trivial monodromy and therefore these contribute to $H^*(\mn^d) =H^*(\bR  {\hn}_{*}\QQ)$. \jc{For $n=2$ we describe the contributions of the summands supported on $S_{(1,1)}$ explicitly (\cref{rem:rank2}).} 

The key idea that allows us to get a hold on the supports of the perverse cohomology sheaves $^p\!\!{\mathscr H}^{{k}}(\bR{\hn}_{*}\QQ)$ of the direct image,  is to compare the  Hitchin fibration with a ``larger" fibration. In our situation, the description of Higgs bundles as sheaves on spectral curves allows us to use the fibration of compactified {J}acobians for a versal deformation of singular spectral curves. As is proved in \cite{MSV}, for those families only the full base is a support. To use this, we then study how the intersection cohomology sheaves on the versal family decompose under restriction to the Hitchin base. We prove a simple restriction result (\Cref{prop:splitperverse}) which we can then use for an explicit computation, because the Cattani--Kaplan--Schmid complex gives a rather explicit combinatorial description of the contribution to the top cohomology sheaves in the case of nodal curves. Interestingly, the combinatorial description is related to the bond matroid of the dual graph of the spectral curve, which luckily had been studied for entirely different reasons before.

It may be interesting to note that there is a simple geometric explanation for the different behavior between the Hitchin fibration we consider and the version with poles treated in \cite{ch-la}: If $a \in \sn$ corresponds to a reducible nodal spectral curve $C_a$ with ${r}$ nonsingular irreducible components, let $V$ be the base of its versal deformation. Then, the codimension in $V$ of the stratum $V^\times$ where all the nodes persist, equals the number of nodes, whereas the codimension of 
$\sn$ in the Hitchin base is smaller, {it is equal to $\delta^{\aff}(\ven)$ which differs from the number of nodes by $r-1$ (\cref{lem:deltasn}).}  In other words, {locally around a point $a\in \sn^\times$ the family of spectral curves over the Hitchin base $\an$ defines a morphism to the base of the versal defomation $V$, but the image of this morphism is not transversal to the stratum $V^\times$}.  This does not happen in the Hitchin fibration with poles, and
it is precisely this lack of transversality which is responsible for the splitting of the restriction of the intersection cohomology sheaves into summands (\Cref{rem:transverseorsplit}).

An elementary example of this phenomenon may be seen in the deformation of a curve consisting of two rational components meeting transversally in two nodes. The versal deformation has dimension two, every curve in the family except the central one is irreducible, and it is easily seen that only the full base is a support. In particular, denoting by $R^1$ the local system of first cohomology on the smooth locus, there is a non vanishing cohomology sheaf ${\mathcal H}^1(\IC(R^1))$ at the origin, accounting for the extra component. If we restrict this map to a disc passing through the origin, the total space of the family remains nonsingular but the restriction of $\IC(R^1)$ splits into two summands, one of which, supported at the origin, is precisely 
${\mathcal H}^1(\IC(R^1))$.


The structure of the article is as follows. In \Cref{sec:NotationAndBackground}, we set up notation and conventions.
In  \Cref{sec:supports_delta}
{we recall the main result from \cite{MS}, that constrains the potential supports of our perverse cohomology sheaves in terms of higher discriminants. The symplectic structure of Hitchin fibrations allows us to describe these in terms of the action of Jacobians of spectral curves.} For this we use that  the differential of  the Hitchin morphism $\hn$ is dual with respect to the symplectic form on $\mn^d$ to the infinitesimal action  of the abelian group schemes acting on the fibres. As it is hard to find this property of the Hitchin {fibration} explicitly in the literature, we include an algebraic proof in the  more general setup of the Hitchin fibration for complex reductive groups in the appendix (\Cref{prop:dactdhdual}). 
See also \cite{drs}, where this duality is established for a large class of integrable systems, including  our Hitchin fibration for $\GL_n$.
In \Cref{sec:supports_partions}, we combine the Ng\^o support theorem and results on compactified Jacobians to identify the strata $\sn$ as the only potential supports in $\an^{\red}$. Next, in \Cref{sec:comparison_versal} we prove the restriction result for $\IC$-sheaves  mentioned above (\Cref{prop:splitperverse}) and {show that it applies to} the Hitchin fibration by computing the Kodaira--Spencer map for the universal family of spectral curves. {In \Cref{sec:sn_supports} we use the Cattani--Kaplan--Schmid complex for the versal family to translate the problem of determining the generic fibers of the summands supported on $\sn$ into a combinatorial problem that we can then solve (\cref{thm:main1}).} {In \Cref{sec:mon_loc_sys} we  describe the monodromy of these summands, in order to prove that the new summands contribute to the global cohomology of the Hitchin fibration for any $n\geq \jc{2}$.}

\subsection*{Acknowledgments:} We would like to thank Fabrizio Caselli, Emanuele Delucchi, L\^e Minh H\'a, {Tam\'as Hausel}, \jc{Mirko Mauri,}  Luca Moci and Bau Chao Ng\^o for numerous discussions, comments and helpful advice during the long time that passed since we started working on this project. {We thank the referees for many helpful suggestions.}
Mark Andrea de Cataldo, who is partially supported by NSF grants DMS-1600515 and 1901975, would like to thank the Max Planck Institute for Mathematics in Bonn and the Freiburg Research Institute for Advanced Studies for the perfect
working conditions; the research leading to these results has received funding from the
People Programme (Marie Curie Actions) of the European Union's Seventh Framework
Programme (FP7/2007-2013) under REA grant agreement n.\ [609305].
Jochen Heinloth was partially supported by RTG 45 of the DFG.

\section{Notation and setup}\label{sec:NotationAndBackground}
Throughout this article we work over the complex numbers $\comp$. \jchange{We will use the analytic topology in order to work with constructible sheaves of $\bQ$-vector spaces. Readers preferring the \'etale topology, could recover all of our results in that topology by using $\bQ_\ell$-coefficients.}
\subsection{Conventions on intersection cohomology}\label{subsec:ConventionIC}
We start by recalling the basic results on intersection cohomology that we  need. 
To reduce the appearance of shifts of complexes in our results we will employ the following numbering convention for intersection cohomology sheaves, which differs from the one used in \cite{bbd}: 
Let $X$ be an algebraic variety,  $Y \hookrightarrow X$ be a closed subvariety and $\cL$  a local system on a smooth open subset  $j: Y^\circ \hookrightarrow Y$. 
We denote by $\IC(Y, \cL)$ the intersection cohomology complex with the normalization such that
$\IC(Y, \cL)_{|Y}=\cL[-\codim Y]$. In particular a local system on an open subset of $X$ will be put in cohomological degree $0$.  With this convention the strong support condition reads
\begin{align*}
&\cH^{l}(\IC(Y, \cL))=0 & \hbox{ if }l<\codim Y,  \\ 
&\cH^{\codim Y}(\IC(Y, \cL))=j_* \cL, & \\
&\codim \mathrm{Supp}\, \cH^{l}(\IC(Y, \cL))> l &\hbox{ for } l> \codim Y,
\end{align*}
i.e., this is the usual t-structure, but shifted by $\dim X$. This will be useful for us, as we will study restrictions of perverse sheaves to closed subvarieties and we can then avoid to shift the constant sheaf.  

A semisimple perverse sheaf on a complex variety $X$ is a complex of the form $P=\bigoplus_\alpha \IC(Y_\alpha, L_\alpha)$,
where $Y_\alpha \subseteq X$ are irreducible closed subvarieties and $L_\alpha$ are semisimple local systems defined 
on dense open subsets of the $Y_\alpha$'s. The generic points of the $Y_\alpha$'s 
are called the {\em supports} of $P$. 

If  
$h\colon M \to X$ is a proper map between smooth varieties, the decomposition theorem of \cite{bbd} says that 
$$\bR h_*\QQ \simeq \bigoplus_{k\geq 0}	 \,\,
^p\!\!{\mathscr H}^k(\bR h_*\QQ)[-k]$$
{and in addition for all $k$ the $k$-th perverse cohomology sheaf $^p\!\!{\mathscr H}^k(\bR h_*\QQ)$ is a semisimple perverse sheaf}. The union of supports of the perverse sheaves $^p\!\!{\mathscr H}^k(\bR h_*\QQ)$
is the set of {\em supports} of the map $h$  (see \cite[\S 7]{NgoLEmme}).

We say that a semisimple complex $K=\bigoplus_k \,\,
^p\!\!{\mathscr H}^k(\bR h_*\QQ)[-k]$ has {\em no proper supports} if $X$ is the only support of $K$.
\subsection{The Hitchin fibration}\label{subsec:HitchinFibration}
We fix a nonsingular, connected, projective curve $C$ of genus $g\geq 2$, an integer $n \in \zed_{\geq 1}$, and 
an integer $d \in \zed$  such that ${\rm gcd}(n,d)=1$. We  denote by $K_C$ the canonical bundle of $C$. 

We denote by $\Higgs_n^d$ the moduli stack of Higgs bundle of rank $n$ and degree $d$ on $C$, i.e., it parametrizes pairs $(E,\hf) $ where $E$ is a vector bundle of rank $n$ and degree $d$ on $C$ and $\hf\in H^0(C,\End(E)\tensor K_C)$. 

We denote by $\cM_{n}^{d}$ the coarse moduli space of stable Higgs bundles of rank $n$ and degree  $d$, where  as usual,  stability
is defined by imposing the inequality $\deg (F)/{\rm rank}(F) < \deg (E)/{\rm rank} (E)$ for every $\hf$-invariant
proper sub-bundle $F \subseteq E$. 

Because of our assumption that $n$ and $d$ are coprime $\cM_{n}^{d}$ is  (see \cite[Theorem 6.1 and Proposition 7.4]{Nitsure};
\mc{for the irreducibility, see \cite[\S2.1]{deca}}) an irreducible, nonsingular, quasi-projective variety of dimension  
\begin{equation}\label{eq:dimMn}
\dim (\cM_{n}^{d})= n^2(2g-2)+2=: 2\dn.
\end{equation}
The cotangent space $T^*\nn$ of the moduli space $\nn$ of stable rank $n$ and degree $d$ vector bundles on $C$ is a dense open subvariety of $\cM_{n}^{d}$.
The Hitchin base is defined to be the vector space
\begin{equation}\label{anz}
\an:= \prod_{i=1}^n H^0(C, \can^{\otimes i}),
\end{equation}
which has dimension
\begin{equation}\label{dan}
 \dim (\an)= \frac{1}{2} \dim (\cM_n^d) =   n^2(g-1)+1= \dn.
\end{equation}
The Hitchin morphisms
\begin{equation}\label{himo}
\un{h}_n^d\colon \Higgs_n^d \to \an \text{ and }
\hn^d: \cM_n^d \to \an
\end{equation}
assigns to any Higgs bundle $(E,\hf)$, the coefficients of the characteristic polynomial of $\hf$.
The morphism $\hn^d$ is proper, flat of relative dimension $\dn=n^2(g-1)+1$ {(e.g. \cite[Theorem 6.1]{Nitsure},\cite[Theorem II.5]{Faltings})}, by Stein factorization and the description of the generic fiber of $\hn$ recalled below (Theorem \ref{thm:BNR}) it has connected fibers, and it is often called the Hitchin fibration.

Since the degree $d$ doesn't play any role in what follows, as long as it is coprime to the rank $n$, we will not indicate it from now on, and simply write $\mn$ for  $\cM_n^d$
and $\hn$ for $\hn^d$.

\subsection{Spectral curves and the BNR-correspondence}\label{spcv}

As the key to the geometry of the fibers of the Hitchin fibration $h_n$ is their description as compactified Jacobians of spectral curves through the Beauville--Narasimhan--Ramanan--correspondence {(\cite{BNR})} we also recall this briefly.

Any {closed } point $a\in \an$ defines a curve $C_a$, called spectral curve, in the total space of the cotangent bundle $T^*C={\rm Tot}_C(\can)$ by viewing $a$ as a monic  polynomial of degree $n$ with coefficient of the degree $n-i$ term in $H^0(C,K_C^{\otimes i})$. This defines a flat family $C_{\an}\to \an$ of projective curves. {We will denote the fiber over any, not necessarily closed point $a\in\an$ by $C_a$. Everything we recall below for spectral curves over closed points holds for these as well, after base change to the residue field $k(a)$ of $a$.}

The natural projection $\pi: C_a \to C$, exhibits the spectral curve as a degree $n$ cover of $C$, but $C_a$ can be singular, non-reduced and reducible. {As by construction $\pi_* \cO_{C_a} \cong \oplus_{r=0}^{n-1}K_C^{\otimes -r}$ the family $C_\cA$ is a family of curves of arithmetic genus $n^2(g-1)+1= \dn$.}

We denote by $\an^{\mathrm{red}} \subset \an$ the subset corresponding to reduced spectral curves, by $\an^{\mathrm{int}} \subset\an^{\mathrm{red}} $ the subset corresponding to integral spectral curves and by $\an^{{\times}}\subset \an$ the open subset corresponding to spectral curves whose singularities are at worst nodes.
For us reducible spectral curves will be of particular interest.

When viewed as an effective  divisor on the surface $T^*C$, any spectral curve $C_a$
can be written uniquely as 
\begin{equation}\label{eq:spectralcurvedecomp}
C_a = \sum_{k=1}^{{r}} m_k{C}_{a_k}, 
\end{equation}
where the $a_k$ are the distinct irreducible factors of the characteristic polynomial and $m_k$ their multiplicities.
In particular, the $C_{a_k}$ are integral and pairwise distinct curves which are
spectral curves of some degree $n_k$. We  then have  
\begin{equation}\label{eq:DecompositionOfN}
n=\sum_{k=1}^r m_k n_k.
\end{equation}
For $\un{n}=(n_k)_k \in \bZ_{>0}^r$ we write
$$\cA_{\un{n}} := \prod_{k=1}^r \cA_{n_k}$$
Then, for $\un{n},\un{m}  \in \bZ_{>0}^r$  satisfying (\ref{eq:DecompositionOfN}), multiplication of polynomials $(p_k)_k \mapsto \prod p_k^{m_k}$ defines a finite morphism 
$$ \mult_{\un{m},\un{n}}\colon \cA_{\un{n}} \to \cA_n$$
and we denote by $S_{\un{m},\un{n}}$ its image. For { our results the  case } $\un{m}=\un{1}=(1,\dots,1)$ {is the most important one, as in this case } the generic point of the image consists of reduced spectral curves. {We will thus} abbreviate $$S_{\un{n}}:= S_{\un{1},\un{n}} \text{ and } \mult_{\un{n}}:=\mult_{\un{1},\un{n}}.$$
The generic spectral curves defined by points in these subsets are rather simple. 

\begin{lemma}\label{rem:bertini}
\begin{enumerate}
	\item[]
	\item For every $a\in \cA$ the spectral curve $C_a$ is connected.
	\item For every $\un{n},\un{m}$ satisfying $\sum m_kn_k=n$ there is a dense open  subset  $$S_{\un{m},\un{n}}^{\times} \subset S_{\un{m},\un{n}}$$ such that for $a\in  S^{\times}_{\un{m},\un{n}}$ the reduced curve $C_a^{\red} \subset C_a$ is nodal and with nonsingular irreducible components. 
	
	In particular, since every irreducible component of $C_a$ has genus $g\geq 2$, the curves $C_a^{\red}$ are stable curves in the sense of Deligne-Mumford \cite{DM} for all $a\in  S^{\times}_{\un{m},\un{n}}$.
\end{enumerate}
\end{lemma}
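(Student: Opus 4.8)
The plan is to prove the two assertions separately. \emph{Part (1)} is a short cohomology computation: for any $a\in\cA_n$ the projection $\pi\colon C_a\to C$ is finite flat of degree $n$, and the Beauville--Narasimhan--Ramanan description of $C_a\subset T^*C$ gives a canonical isomorphism of $\cO_C$-modules $\pi_*\cO_{C_a}\cong\bigoplus_{j=0}^{n-1}\can^{-j}$, valid whatever the singularities of $C_a$ are (writing $\tau$ for the tautological section, the relation $\tau^n=-(\pi^*a_1\tau^{n-1}+\dots+\pi^*a_n)$ eliminates all powers $\tau^j$ with $j\ge n$). Taking global sections,
\[
H^0(C_a,\cO_{C_a})=H^0(C,\cO_C)\oplus\bigoplus_{j=1}^{n-1}H^0(C,\can^{-j})=\comp,
\]
because $\deg\can^{-j}=-j(2g-2)<0$ for $1\le j\le n-1$ since $g\ge2$. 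A projective $\comp$-scheme with $H^0(\cO)=\comp$ is connected, so $C_a$ is connected. Applying this with $n$ replaced by $n_k$, $C_{a_k}$ is connected for every $a_k\in\cA_{n_k}$; in particular a nonsingular $C_{a_k}$ is irreducible.

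\emph{Part (2)} is a Bertini-type genericity argument. Since $C_a^{\red}$ depends only on the reduced support $\bigcup_kC_{a_k}$ of $C_a$, not on the multiplicities $m_k$, it suffices to produce a dense open $U\subset\cA_{\un{n}}=\prod_k\cA_{n_k}$ such that for $\un{a}=(a_k)\in U$ the curves $C_{a_1},\dots,C_{a_r}$ are nonsingular, meet pairwise transversally, and have no point lying on three of them: then they are automatically pairwise distinct (by transversality) and irreducible (nonsingular and connected, by part (1)), so $\bigcup_kC_{a_k}$ is nodal with nonsingular irreducible components. The linear system of degree-$n_k$ spectral curves is base point free on $T^*C$ --- a consequence of base point freeness of $|\can^{\otimes n_k}|$, which holds for all $n_k\ge1$ since $g\ge2$ --- hence the universal degree-$n_k$ spectral curve $\cC_{n_k}\subset T^*C\times\cA_{n_k}$ is an affine bundle over $T^*C$ (over each point, its fibre is the affine subspace of $\cA_{n_k}$ cut out by one nonzero affine-linear equation), so $\cC_{n_k}$ is smooth, and by generic smoothness in characteristic zero a general $C_{a_k}$ is nonsingular (the locus of such $a_k$ being open and dense). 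Next, fix a nonsingular $C_{a_k}$: the restriction to it of the degree-$n_l$ spectral linear system is base point free on the smooth curve $C_{a_k}$ (no point of $C_{a_k}$ lies on every degree-$n_l$ spectral curve), and it is non-constant --- if it were constant, all its members would equal one fixed effective divisor on $C_{a_k}$, nonempty since two spectral curves of positive degree always meet in $T^*C$ (their intersection number is $n_kn_l(2g-2)>0$), and a point of its support would lie on every degree-$n_l$ spectral curve, contradicting base point freeness --- so by Bertini on the curve $C_{a_k}$ a general member is reduced, i.e.\ a general degree-$n_l$ spectral curve meets $C_{a_k}$ transversally in distinct points. An induction on $j$ now assembles these conditions: given generic $a_1,\dots,a_{j-1}$ for which $\bigcup_{i<j}C_{a_i}$ is nodal, with node set $N\subset T^*C$ (finite, the $C_{a_i}$ being distinct irreducible curves in a surface), a general $a_j$ simultaneously makes $C_{a_j}$ nonsingular, transverse to each $C_{a_i}$ with $i<j$, and disjoint from $N$ (passing through a prescribed point of $T^*C$ is a proper closed condition on $a_j$ by base point freeness), and disjointness from $N$ forbids triple points. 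The locus $U$ of all $\un{a}$ with $C_{a_1},\dots,C_{a_r}$ nonsingular, pairwise transverse and without triple points is then nonempty, and open on $\cA_{\un{n}}$ (non-transversality, and the existence of a point on three of the curves, are closed conditions, since the spectral curves form a proper family), hence dense in the irreducible variety $\cA_{\un{n}}$.

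It remains to descend to $S_{\un{m},\un{n}}$ and to verify stability. As $\mult_{\un{m},\un{n}}\colon\cA_{\un{n}}\to\cA_n$ is finite, $\mult_{\un{m},\un{n}}(\cA_{\un{n}}\setminus U)$ is closed in $S_{\un{m},\un{n}}$ of strictly smaller dimension, so $S_{\un{m},\un{n}}^{\times}:=S_{\un{m},\un{n}}\setminus\mult_{\un{m},\un{n}}(\cA_{\un{n}}\setminus U)$ is dense open in $S_{\un{m},\un{n}}$. For $a\in S_{\un{m},\un{n}}^{\times}$ every preimage $\un{a}$ lies in $U$, so the components $C_{a_k}$ are nonsingular and, by part (1), integral; hence~\eqref{eq:spectralcurvedecomp} reads $C_a=\sum_km_kC_{a_k}$ and $C_a^{\red}=\bigcup_kC_{a_k}$ is nodal with nonsingular irreducible components, as claimed. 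Moreover $C_a^{\red}$ is connected (part (1)) and nodal (just shown), and each $C_{a_k}$ is a smooth connected spectral curve of degree $n_k$, so by the pushforward formula of part (1) it has genus $g(C_{a_k})=1-\chi(\cO_{C_{a_k}})=1-\sum_{j=0}^{n_k-1}\chi(\can^{-j})=n_k^2(g-1)+1\ge g\ge2$; therefore $C_a^{\red}$ has arithmetic genus $\ge\sum_kg(C_{a_k})\ge2$ and has no rational or elliptic component, so it satisfies the Deligne--Mumford stability conditions. The step needing the most care is the middle one: making precise that the restricted linear system on $C_{a_k}$ is non-constant (so that its general member is reduced in characteristic zero), and organizing the induction so that nonsingularity of the components, pairwise transversality, and the absence of triple points are all realized on one open dense locus. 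Part (1) and the genus computation are routine.
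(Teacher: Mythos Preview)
Your proof is correct and follows essentially the same Bertini-type approach as the paper, carried out in considerably more detail: the paper simply notes that spectral curves arise as sections of the big and globally generated $\cO_{\bP}(n_k)$ on $\bP_C(\cO_C\oplus K_C)$ and invokes Bertini, whereas you spell out base-point-freeness, generic smoothness, the transversality and triple-point arguments, and the descent along the finite map $\mult_{\un{m},\un{n}}$. Your direct computation $H^0(C_a,\cO_{C_a})=\comp$ for part~(1) is in fact sharper than the paper's terse appeal to Bertini, since it gives connectedness for \emph{every} $a$ rather than just the generic one.
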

\begin{proof}
	This is a consequence of Bertini's theorem: {The spectral curves $C_{a_k} \subset T^*C \subset \bP:=\bP_C(\cO_C \oplus K_C)$  are defined by general sections of the relative $\cO_{\bP}(n_k)$ which has global sections $H^0(C,\Sym^{n_k}(\cO_C\oplus K_C))$. In particular for $n_k>1$ it induces a morphism to projective space that embeds $T^*C$ and thus the generic hyperplane section $C_{a_k}$ is smooth and connected. For $n_k=1$ all spectral curves $C_{a_k}$ are smooth and connected, as $C$ is. 
	Again by positivity of $K_C^{n_k}$ the open subset of $\cA_{\un{n}}$ where the curves intersect transversally is non-empty.}	
\end{proof}
%

\jchange{We will also need to recall the correspondence between Higgs bundles and torsion free sheaves on spectral curves, that was proved in increasing generality by Hitchin, Beauville--Narasimhan--Ramanan, and Schaub.}  
Given a Higgs bundle $(E,\phi)$ with $h_n(E,\phi)=a$ we can consider $E$ as a coherent sheaf on $C_a$, because sheaves on $T^*C$ can be viewed as $\cO_C$-modules equipped with an action of the $\cO_C$-algebra $\oplus_{i\geq 0} K_C^{-\otimes i}$.  The Cayley--Hamilton theorem then says that the module $\cF_{E,\hf}$ defined by $\phi$ is supported on $C_a$ and it is a torsion free sheaf of rank $1$ on $C_a${, i.e., the restriction map from local sections of $\cF_{E,\phi}$ to the preimage in $C_a$ of the generic point of $ C$ is injective and at all generic points of $C_a$ the sheaf has the same length as the structure sheaf $\cO_{C_a}$} (a notion that in the case of non-reduced curves was introduced by Schaub \cite[{D\'efinition 1.1 and Définition 1.2}]{Schaub}). Conversely given $\cF$ a torsion free sheaf of rank $1$ on $C_a$ the sheaf $E=\pi_*\cF$ is a vector bundle\jchange{, because it is a torsion free $\cO_C$-module on the smooth curve $C$}, it is of rank $n$\jchange{, because this is the length at the generic point of $C$ and it }comes equipped with a Higgs field $\hf$ \jchange{ induced from the $\cO_{C_a}$-module structure}.

\jchange{These constructions are inverse to each other and work without change for flat families of sheaves.} 
 \jchange{Since $\pi\colon C_a \to C$ is a finite morphism, $H^*(C_a,\cF)\cong H^*(C,\pi_{*}\cF)$ and thus the Euler characteristic of $\cF$ and the induced Higgs bundle $E$ agree.  Denoting by $\Coh_{1,C_\cA}^{tf} \to \cA$ the stack of torsion free sheaves of rank $1$ on spectral curves, we can summarize this as follows:}
\begin{theorem}[{\cite{Hitchin}\cite{BNR}\cite[{Proposition 2.1}]{Schaub}}]\label{thm:BNR}
	The functor $(E,\hf) \mapsto \cF_{E,\hf}$ induces an equivalence  $\Higgs_n\cong \Coh_{1,C_\cA}^{tf}$. Under this equivalence the stack $\Higgs_n^d$ is identified with the substack of torsion free sheaves of rank $1$ and Euler characteristic $\chi=d+n(1-g)$.
\end{theorem}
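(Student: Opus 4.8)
The plan is to reduce the statement to the elementary description of quasi-coherent sheaves along the affine morphism $\pi\colon T^{*}C=\mathrm{Tot}_{C}(\can)\to C$, combined with the Cayley--Hamilton theorem. Since $\pi$ is affine with $\pi_{*}\cO_{T^{*}C}=\Sym_{\cO_{C}}(\can^{-1})=\bigoplus_{i\geq 0}\can^{-i}$, the category of quasi-coherent $\cO_{T^{*}C}$-modules is equivalent to the category of pairs $(\cM,\hf)$, where $\cM$ is a quasi-coherent $\cO_{C}$-module and $\hf\colon\cM\to\cM\otimes\can$ is the $\cO_{C}$-linear action of the degree-one summand $\can^{-1}$; the equivalence sends $\cG$ to $(\pi_{*}\cG,\ \cdot\theta)$, where $\theta\in H^{0}(T^{*}C,\pi^{*}\can)$ is the tautological section. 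If $(E,\hf)$ is a Higgs bundle with $\hn(E,\hf)=a$, then Cayley--Hamilton gives $a(\hf)=0$, so the $\Sym(\can^{-1})$-module structure on $E$ determined by $\hf$ factors through $\Sym(\can^{-1})/(a)=\pi_{a,*}\cO_{C_{a}}$; that is, $\cF_{E,\hf}$ is genuinely a sheaf on the spectral curve $C_{a}$. Restricting the above equivalence along the closed immersion $C_{a}\hookrightarrow T^{*}C$ identifies quasi-coherent $\cO_{C_{a}}$-modules with pairs $(\cM,\hf)$ satisfying $a(\hf)=0$, so it remains to pin down which sheaves on $C_{a}$ correspond to Higgs \emph{bundles}.

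Next I would characterise, among sheaves $\cF$ on $C_{a}$, those for which $\pi_{a,*}\cF$ is a vector bundle of rank $n$ carrying a Higgs field with characteristic polynomial exactly $a$. Because $C_{a}\subset T^{*}C$ is an effective divisor on a smooth surface it is a Gorenstein curve, $\pi_{a}\colon C_{a}\to C$ is finite flat of degree $n$, and $\pi_{a,*}$ is exact. If $\cF$ is torsion free (pure of dimension one), then $\cF$ is a maximal Cohen--Macaulay $\cO_{C_{a}}$-module, hence $\pi_{a,*}\cF$ is a torsion-free, therefore locally free, $\cO_{C}$-module; its rank is the generic length of $\cF$ over $\cO_{C_{a}}$, so the rank-one hypothesis is exactly what forces $\rk(\pi_{a,*}\cF)=n$. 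Conversely, if $\pi_{a,*}\cF=E$ is locally free, then $\cF$ has no subsheaf of zero-dimensional support (such a subsheaf would push forward to a torsion subsheaf of $E$) and a rank count forces $\cF$ to have generic rank one. Finally, localising at the generic point $\eta$ of $C$ one has $\cO_{C_{a}}\otimes k(\eta)\cong k(\eta)[\xi]/(a_{\eta})$ with $\cF\otimes k(\eta)$ free of rank one over it, so $\hf=\cdot\theta$ acts as multiplication by $\xi$ on a cyclic module and thus has characteristic polynomial $a_{\eta}$; since the characteristic polynomial is a regular function agreeing with $a$ on a dense open set, $\hn(\pi_{a,*}\cF,\hf)=a$. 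Hence $\cF\mapsto(\pi_{a,*}\cF,\cdot\theta)$ and $(E,\hf)\mapsto\cF_{E,\hf}$ are mutually quasi-inverse equivalences.

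To obtain the equivalence of \emph{stacks} rather than a pointwise statement, I would note that the universal spectral curve is finite flat of degree $n$ over $C\times\cA$, that $\pi_{a,*}$ commutes with arbitrary base change, that flatness of a family of sheaves over the base can be read off from this finite push-forward, and that $(E,\hf)\mapsto\cF_{E,\hf}$ is manifestly functorial and base-change compatible; the pointwise equivalence therefore upgrades to an equivalence of stacks over $\cA$. For the degree bookkeeping, finiteness of $\pi_{a}$ gives $\chi(C_{a},\cF)=\chi(C,\pi_{a,*}\cF)=\chi(C,E)$, while Riemann--Roch on $C$ reads $\chi(C,E)=\deg E+n(1-g)$; thus $\deg E=d$ if and only if $\chi(\cF)=d+n(1-g)$, identifying $\Higgs_{n}^{d}$ with the substack of torsion-free rank-one sheaves of Euler characteristic $d+n(1-g)$.

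The main obstacle is not the reduced case --- for $a\in\cA^{\red}$ this is the classical Hitchin / Beauville--Narasimhan--Ramanan argument --- but making sense of ``torsion-free sheaf of rank one'' on the possibly non-reduced curves $C_{a}$, which is Schaub's contribution \cite{Schaub}. At a generic point of a component $C_{a_{k}}$ occurring with multiplicity $m_{k}$ the local ring of $C_{a}$ is Artinian rather than a field, so ``rank one'' must be read as ``generically free of rank one over $\cO_{C_{a}}$'', and one must check that this condition is simultaneously (i) preserved by both functors, (ii) equivalent to $\pi_{a,*}\cF$ being locally free of rank $n$, and (iii) responsible for the characteristic polynomial of the recovered Higgs field being $a$ itself and not merely some monic degree-$n$ polynomial annihilating it. Granting the commutative algebra of maximal Cohen--Macaulay modules over Gorenstein curves and of modules over finite flat $\cO_{C}$-algebras, the rest of the argument is formal.
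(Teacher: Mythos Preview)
The paper does not prove this theorem; it is stated with citations to \cite{Hitchin}, \cite{BNR}, \cite{Schaub} and is not accompanied by a proof (the surrounding text only sketches the construction of $\cF_{E,\hf}$ via the $\cO_C$-algebra $\bigoplus_{i\geq 0}K_C^{-i}$ and Cayley--Hamilton, exactly as you do). Your sketch is the standard argument one finds in those references: the affine-pushforward dictionary between sheaves on $T^*C$ and $(\cM,\hf)$, Cayley--Hamilton to land on $C_a$, the Gorenstein/Cohen--Macaulay argument identifying pure-dimension-one rank-one sheaves on $C_a$ with those whose pushforward is locally free of rank $n$, the generic-point computation of the characteristic polynomial, and Riemann--Roch for the degree identification. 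You also correctly isolate the genuine content in the non-reduced case, namely Schaub's notion of rank and the verification of points (i)--(iii) there; since you explicitly defer those details to \cite{Schaub}, the sketch is sound and there is nothing to compare against in the paper itself.
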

In \cite[{Th\'eor\`eme 3.1}]{Schaub} (see \cite[Remarque 4.2]{ch-la}) it was explained how stability of Higgs bundles translates into a stability condition for sheaves on spectral curves.

Let $a\in S_{\un{n}}$ be a point that defines a reducible, reduced spectral curve $C_a$ with irreducible components $C_{a_1},\dots,C_{a_{{r}}}$. 
In this case a torsion-free rank $1$ sheaf $\cF$ on $C_a$ defines a stable Higgs bundle of degree $d$ if and only if for all proper subcurves $C_I=\cup_{i\in I} C_{a_i} \subsetneq C$ we have
$$ \chi(\cF_{C_I}) \geq \sum_{i \in I} n_i \cdot (\frac{d}{n}+1-g),$$
where $\chi$ is the Euler characteristic and $\cF_{C_I}$ is the maximal torsion-free quotient of $\cF|_{C_I}$. 
\jchange{This coincides with the usual stability condition for Higgs bundles, as $\chi(\cF_{C_I})=\deg(\pi_*\cF_{C_I})+ \sum_{i\in I} n_i (1-g)$. In \cite[{Th\'eor\`eme 3.1}]{Schaub} this formula is written in terms of a normalized degree function $\deg_{X}(\cF)=\chi(\cF)-\chi(\cO_X)$, which simplifies to the above inequality as $\chi(\cO_{C_I})=(\sum_{i\in I} n_i)^2(1-g)$.}
\begin{remark}\label{rem:BNRstable}
	This notion of stability coincides with a stability notion for compactified Jacobians (see e.g., \cite[{Definition 2.11}]{MRV1}) with respect to the polarization $ \un{q}:=(n_i\cdot (\frac{d}{n}+1-g))_i$, which is a general polarization as $\gcd(n,d)=1$.
	
	In particular the restriction of the Hitchin fibration $\hn \colon \mn\to \an$ to $\cA^{\red}$ is a fine relative compactified Jacobian for the family $C_{\cA}|_{\cA^{\red}}$ in the sense of Esteves \cite[Theorem A]{EstevesTAMS}.
\end{remark}
Finally we recall the $\delta^{\aff}$-invariant of our spectral curves. For any spectral curve $C_a$ we denote by $J_a:=\Pic^{\un{0}}_{C_a}$ the generalized Jacobian of $C_a$, which is the group scheme parameterizing line bundles on $C_a$ that have degree $0$ on all irreducible components of $C_a$. The $J_a$ are the fibers of a group scheme $J_{\cA} \to \cA_n$ over $\cA$ which acts on $\mn$.

For every $a$ the connected group scheme $J_a$ has a canonical filtration 
$$ 0 \to J_a^{\aff} \to J_a \to J_a^{\proj} \to 0$$
where $J_a^{\aff}$ is affine, $J_a^{\proj}$ is projective and both are connected   {(e.g. \cite[Section 9.2, Corollary 11]{BLR})}. One defines
$$ \delta^{\aff}(C_a):= \dim( J_a^{\aff}).$$

\begin{remark}\label{rem:deltanode}{\cite[Section 9.2, Example 8]{BLR}}
	If $C_a$ is a reduced, connected curve and $\nu\colon \widetilde{C}_a \to C_a$ is the normalization, then $\nu^*$ defines an isomorphism $J_a^{\proj}\cong \Pic_{\widetilde{C}_a}^0$ {and $J_a^{\aff}=\ker(\nu^*)$}. In this case 
\begin{equation}\label{eq:deltaff}
\delta^{\aff}(C_a) =\dim  H^0(C,\nu_*\cO_{\widetilde{C}_a}/\cO_C)+1-\#(\pi_0(\widetilde{C}_a)).
\end{equation}
If furthermore, the only singularities of $C_a$ are nodes, we have
\begin{equation}\label{deltaff_nodes}
\delta^{\aff}(C_a) =\# (\text{nodes})+1-\#(\pi_0(\widetilde{C}_a))=1-\chi(\Gamma)=\dim H^1(\Gamma),
\end{equation}	
where $\Gamma$ is the dual graph of the curve $C_a$.
\end{remark}

The function $a \mapsto \delta^{\aff}(C_a)$ is upper semi-continuous by \cite[X, Remark. 8.7]{sga3.2.2}, i.e. there are closed subsets
$$\cA_n^{\geq \delta}:=\{ a \in \cA_n | \delta^{\aff}(C_a)\geq \delta\} \subseteq \cA_n.$$

\begin{notation}
For a flat family $C_Y \to Y$ of projective curves over an irreducible scheme $Y$ with generic point $\eta_Y$, we will denote by $\delta^{\aff}(Y):= \delta^{\aff}(C_{\eta_Y})$ and call it generic $\delta^{\aff}$-invariant on $Y$.	
\end{notation}	

\begin{lemma}\label{lem:deltasn}
	Let $\un{n}$ be a partition of $n$ and ${\eta_{\un{n}}}\in S_{\un{n}}$ the generic point then we have 
	$$\codim S_{\un{n}}= \delta^{\aff}(C_{{\eta_{\un{n}}}})=:\delta^{\aff}(\un{n})$$
	{ and $$\delta^{\aff}(\ven) = \sum_{i<j} n_in_j(2g-2)-r+1.$$} 
\end{lemma}
\begin{proof}
	We know that $\dim S_{\un{n}} = \dim \cA_{\un{n}} = \sum_{i=1}^{{r}} (n_i^2 (g-1)+1).$
	
	By \cref{rem:bertini} for a general $a=(a_{{i}})\in \cA_{\un{n}}$ the spectral curve $C_a$ {is a connected curve which }has ${r}$ smooth components intersecting transversally. As each component is defined by a polynomial in $\cA_{n_i}=\oplus_{{k}=1}^{n_i} H^0(C, K_C^{\tensor {k}})$ we have 
$$ \# (C_{a_i}\cap C_{a_j}) = n_in_j (2g-2).$$
By Remark \ref{rem:deltanode} {this implies $\delta^{\aff}(\ven) = \sum_{i<j} n_in_j(2g-2)-r+1$ and thus we find} 
 \begin{align*}
 \dim S_{\un{n}} + \delta^{\aff}(C_a) &=  \left(\sum_{i=1}^{{r}} n_i^2 (g-1)\right) + {r}  + \sum_{i<j} 2(n_in_j)(g-1) - {r} +1
 \\ & = n^2(g-1)+1 = \dim \an{.}
 \end{align*}
\end{proof}	
{
\begin{notation}\label{not:Gamma-n} For a partition $\ven$ of $n$ we will denote by $\Gamma_{\ven}$ the dual graph of any spectral curve given by a point of $S_{\ven}^\times$, i.e., it is the graph with vertices $\{1,\dots,r\}$ corresponding to the irreducible components of the curve and $n_in_j(2g-2)$ edges between the vertices $i,j$, corresponding to the intersection points of the components. 
\end{notation}
}

\section{The supports have to be $\delta$-loci}\label{sec:supports_delta}
In this section, we show that supports of the complex $\bR h_*\bQ$ can only be irreducible components of the subschemes $\cA_n^{\geq \delta}\subset \cA_n$ of spectral curves of $\delta^{\aff}$ invariant at least $\delta$. 
A result of this type appears in \cite{ch-la}. 
Here, we  give a different argument, relating the computation of the higher discriminants of  the Hitchin fibration to the $\delta^{\aff}$ invariant.


Let us recall the notion of higher discriminants of a map from \cite[{Section 1.3}]{MS}:
\begin{definition}\label{def:high_discr}
	Let $f: X \to Y$ be a proper map between complex nonsingular varieties. For any $i\geq 1$, the $i$-th discriminant 
	$\Delta^i (f)$ is  the  locus of $y \in Y$ such that
	there is no $(i-1)$ dimensional subspace of $T_y Y$ transverse to $df_x(T_x X)$ for every $x \in f^{-1}(y)$, i.e.\
	such that the preimage of an $(i-1)$-dimensional disc around $y$ would be nonsingular of codimension $\dim Y - i+1$.
\end{definition}
The $i$-th discriminants $\Delta^i (f)$ form a decreasing sequence of closed subsets and moreover $\Delta^1(f)$ is the discriminant locus of the map $f$, i.e.\ the complement of the biggest open subset of $Y$ where the restriction of the morphism $f$ is a smooth morphism.
{As explained in \cite[Section 8]{MiglioriniSupportTheorems}, the existence of Whitney stratifications \cite[Theorem 4.14]{VerdierStratifications} implies that there is a stratification of $\an$ by smooth locally closed subvarieties such that the preimage under $h$  of transversal slices to the strata are smooth and therefore}
\begin{equation}\label{codest}
\codim \Delta^k (f)\ge k \mbox{ for all } k.
\end{equation}

The relevance of higher discriminants stems from the following:
\begin{theorem} \cite[{Theorem B}]{MS}\label{hd_general}
	Let $f\colon X \to Y$ be a projective map between smooth varieties.   
	{Then any support of $\bR f_* \QQ$ that has codimension $k$ in $Y$ has to be an irreducible component of $\Delta^k(f)$.}
\end{theorem}

For the Hitchin fibration, the higher discriminants turn out to be $\delta$-loci.

\begin{theorem}\label{thm:hidiscrHitc}
	Let $\hn:\mn \to \an$ be the Hitchin map.
	The $i$-th discriminant of $\hn$ is equal to 
	\begin{equation}\label{hidiscrHitc}
	\Delta^i(\hn)=\cA_n^{\geq i}=\{a \in \an \text{ such that } \delta^{\aff}(C_a) \geq i \}.
	\end{equation}
\end{theorem}
\begin{proof} To prove that $\Delta^i(\hn)\supseteq\cA_n^{\geq i}$
	let us denote by $\act \colon J_{\cA_n} \times_{\ha_n} \mn \to \mn$ the action given by the tensor product of line bundles with torsion free sheaves.
	By Proposition \ref{prop:dactdhdual} the differential $d\act$ is dual to the differential $d\hn$ of $\hn$ with respect to the symplectic form $\omega_{\Higgs}$ on $\mn$, {i.e., for any point $m\in \mn$ with $\hn(m)=a$ the differential $d\act_m \colon \Lie(J_a) \to T_m\mn$ of the action $\act_m\colon J_a \to \mn$ given by $\act_m(j)=j.m$ is dual to $dh_{n,m}\colon T_m\mn \to T_a\ha_n$.}
	
	For any $a\in {\cA_n}$ we defined $\delta^{\aff}(C_a)$ to be the dimension of the affine part $J_a^{\aff}$ of $J_a=\Pic^{\un 0}(C_a)$ and this group scheme acts on the projective fiber ${\hn}^{-1}(a)\subset \mn$. By Borel's fixed point theorem there exists a fixed point $m=(E,\hf) \in {\hn}^{-1}(a)$ for the action of { the commutative affine group scheme} $J_a^{\aff}$.
	Therefore $\Lie(J_a^{\aff})$ is in the kernel of {$d\act_m$} and by the duality this implies that $a\in \Delta^{\delta^{\aff}(C_a)}$. 
	
	Conversely we know that for any $m\in {h_n}^{-1}(a)$ the stabilizer $\Stab_{J_a}(m)$ is affine (e.g., because  {for a} rank $1$ torsion free sheaf $\cF$ on $C_a$ and {any line bundle} $\cL\in J_a$ the pull backs $\nu^*\cF$ and $\nu^*(\cF\tensor \cL)$.  to the normalization of $C_a^{\red}$ can only be isomorphic if $\nu^*\cL$ is {$m$-torsion for some $m\leq n$}). Thus we know that at any point $m$ the kernel of $d \act$ is contained in $\Lie(J_a^{{\aff}})$ and therefore { by duality we have} $\Delta^{\delta}(\hn)\subseteq {\cA_n}^{\geq \delta}$.	
\end{proof}
Since  we  have already noted that $\codim \Delta^k (f)\ge k \mbox{ for all } k$ (\cref{codest}), the following is an  immediate consequence of \cref{thm:hidiscrHitc}:
\begin{corollary}\label{cor:codest_delta}
Let $\hn:\mn \to \an$ be the Hitchin map then the codimension of the $\delta$-loci of the Hitchin base satisfy
\begin{equation}
\codim \cA_n^{\geq i} \geq i.
\end{equation}
In particular, since the relative Picard group of  the spectral curve family {$\cC_{\cA}$}  is polarizable (see {\cite[Proposition 4.12.1]{NgoLEmme}}\cite[{{Theorem 3.3.1} }]{deca}), 
{the Hitchin map} is a $\delta$-regular weak abelian fibration in the sense of  Ng\^o \cite[{Section 7.1}]{ngoaf}.
\end{corollary}
	
\begin{remark}\label{rem:Sndeltacomponent} By Lemma \ref{lem:deltasn}, the subvarieties $S_{\un{n}}$ have codimension equal to their generic $\delta^{{\aff}}$ invariant $\delta^{\mathrm{aff}}(\un{n})$.
{By Theorem \ref{thm:hidiscrHitc} i}t follows that they are $\delta^{\mathrm{aff}}(\un{n})$-codimensional components of $\Delta^{\delta^{\mathrm{aff}}(\un{n})}(\hn)$. In view of Theorem
\ref{hd_general}, this makes these subvarieties  potential supports of $\bR h_{n,*}\QQ$.
\end{remark}

\section{The supports have to be partition strata}\label{sec:supports_partions}

Recall that $\cA^{\red}$ is the open subset of $\cA$ parameterizing reduced spectral curves. The main result of this section is the following proposition:
 
\begin{proposition}\label{prop:SuppOnlySn}
	If {a subvariety} $Y\subset \cA$ {with $Y\cap \cA^{\red}\neq \varnothing$ }is a support of $\bR h_*\QQ$, then $Y=S_{\un{n}}$ for some partition $\un{n}$ of $n$.
\end{proposition}
The proof of this result will be a simple combination of Ng\^o's support theorem with information on irreducible components of compactified Jacobians. Let us recall these results. Ng\^o proved a general result on the supports for the cohomology of a projective morphism $h\colon X \to Y$ which is a 
$\delta$-regular weak abelian fibration. {By \cref{cor:codest_delta} this condition is satisfied for the Hitchin fibration $\hn\colon \mn \to \an$ and in this case the result reads as follows. \mc{Recall that due to the nonsingularity of $\mn$, the constant sheaf $\QQ$ on it is self-dual, i.e. it coincides with  its own  Verdier dual, up to shift}.}

{\begin{theorem}[{\cite[Theorem 7.2.1]{NgoLEmme}}]\label{thm:NgoSupp}
	If $Y\subset \an$ is a support of $\bR h_*\QQ$ then the highest cohomology sheaf $\bR^{top} h_* \QQ$ contains a summand supported at $Y$.
\end{theorem}}

For us the main aspect of this theorem is that supports can only appear where the set of irreducible components of the fibers is not locally constant\jcc{, more precisely for any stratification of $\an^{\red}$ such that the restriction of $\bR^{top} h_* \QQ$ to every stratum is locally constant, the supports of $\bR h_*\QQ$ meeting $\an^{\red}$ have to be among the closures of the strata.}

{By decomposing reduced spectral curves into their irreducible components we know that $\cA_n^{\red}$ is the union of the images of the maps $\mult_{\un{n}}\colon (\prod_{k=1}^r \cA_{n_k}^{{\mathrm{int}}})^{\red} \to \cA_n^{\red}$ where $\ven$ runs through the partitions of $n$. As the generic points of these images are the generic points of the strata $S_{\ven}$, we can prove Proposition \ref{prop:SuppOnlySn} simply by showing that the restriction of  the highest cohomology sheaf $\bR^{top} h_* \QQ$ is locally constant on these images. By proper base change this follows from the following Lemma.
}  

\begin{lemma}\label{lem:SuppOnlySn}
	For any partition $\un{n}$ of $n$ let $\mult_{\un{n}}\colon (\prod_{k=1}^r \cA_{n_k}^{{\mathrm{int}}})^{\red} \to \cA_n^{\red}$
	be the restriction to $\cA_n^{\red}$ of the morphism given by the multiplication of characteristic polynomials. Then the sheaf $\mult_{\un{n}}^*\bR^{top} h_* \QQ$ is constant.
\end{lemma}
\begin{proof}
As $h$ is flat and proper  {the formation of $\bR h_*\QQ =\bR h_!\QQ$ commutes with base change and therefore the } stalks of  $\mult_{\un{n}}^*\bR^{top} h_* \QQ=  \mult_{\un{n}}^*\bR^{top} h_! \QQ$ have a basis indexed by the irreducible components of the fibers.
{As explained in {\cref{rem:BNRstable} the BNR-correspondence shows that}}  $h^{-1}(\cA^{\red}) \to \cA^{\red}$ is a fine relative compactified Jacobian for the family of spectral curves which have planar singularities.  This property is stable under any base change by definition. By \cite[Corollary 2.20]{MRV1} the subvariety parameterizing locally free sheaves on the spectral curves is dense in every fiber { and therefore the sheaf of irreducible components of the compactified Jacobian is a subsheaf of the sheaf of components of the Jacobian of the spectral curve.} Moreover the irreducible components of the Jacobian of a spectral curve are indexed by the degrees of the restrictions of the line bundles to the irreducible components of the underlying curve {\cite[Section 9.3, Corollary 14]{BLR}} and {by definition	} stability is determined by a numerical condition on these degrees. As the sheaf of irreducible components of $C_a$ is constant on $(\prod_{k=1}^r \cA_{n_k}^{\mathrm{int}})^{\red}$
we deduce that the sheaf of irreducible components of the Jacobians of $C_a$ is also constant on this space and thus the same is true for the components of the fibers of $h$. This shows our claim.
\end{proof}

\section{Comparison with versal families}\label{sec:comparison_versal}
To check that the strata $\sn \subset \cA_n^{red}$ corresponding to reducible and generically reduced spectral curves are supports we will compare the family of spectral curves at a general point of $\sn$ to a versal deformation.
For the versal family of nodal curves we know from \cite[{{Theorem 1.8}}]{MSV} 
that the cohomology of the corresponding family of compactified Jacobians has full support and that the Cattani--Kaplan--Schmid complex allows to compute the fibers of the corresponding intersection cohomology complexes at every point. 

In order to deduce the decomposition also for the Hitchin fibration, we need to control the behavior of the intersection cohomology complexes under restriction and compute the Kodaira--Spencer map for the family of spectral curves. These results are proven in this section.

\subsection{Splitting the restriction of an intersection cohomology complex}
We begin with an easy fact concerning restriction of a semisimple intersection cohomology complex under the hypothesis that the restriction remains semisimple and satisfies a Hard Lefschetz type symmetry. This happens for example if the complex arises from a projective morphism and the restriction is taken to a closed subvariety with a smooth preimage.
  
The argument is reminiscent of the characterization of supports as ``relevant strata'' in a semismall map in \cite{BM,dCM}. {As in these references we will denote by $\mathrm{D}^b_c(X)$ the category of bounded complexes with constructible cohomology sheaves on a variety $X$.} \jchange{We start with an easy observation.}
\begin{lemma}\label{lem:restrIC}
	Let $\cL$ be a semisimple local system on an open dense subset $U\subset X$ and let  $P=\IC(\cL)$ be its intersection cohomology complex. 
	Let ${\iota}:Z \hookrightarrow X$ be a closed subvariety such that: 
	\begin{enumerate}
		\item 
		$U \bigcap Z$ is Zariski dense in $Z$.
		\item
		The complex ${\iota}^*P$ is perverse semisimple.
	\end{enumerate}
	Then 
	$$ {\iota}^*P = \oplus_k \IC(Z^{k},\cL^{k})$$
	where $Z^k$ is the union of the irreducible components of $\mathrm{Supp}\, \cH^{k}(P)\bigcap Z$  of codimension $k$ in $Z$ and $\cL^k = \cH^k({\iota}^*P)$ on the smooth part of the dense open subset of $Z^k$ where this sheaf is a local system.	
\end{lemma}
\begin{proof}
	Recall that, if $Q$ is a perverse semisimple sheaf on $Z$, 
	then we have a canonical decomposition 
	\begin{equation}\label{candec}
	Q=\bigoplus_{k=0}^n \IC(Z^k, \cL^k)
	\end{equation}
	where, for every $k$,
	$Z^k$ is a  closed subvariety of $Z$ of codimension $k$, and  $\cL^k$ is a semisimple local systems on an open set  $Z^{k,\circ}$ of ${Z^k}$.
	Note that $Z^k$ is allowed to be reducible and $\cL^k$ may have different rank on the different components of $Z^{k,\circ}$.
	
	The subsets $Z^k$ and local systems $\cL^k$ afford an easy characterization, which follows immediately from the strong support condition
	for the intersection cohomology complex (\Cref{subsec:ConventionIC}), i.e.: 
	For every $k$, the closed subset $Z^k$ is  the union of the $k$-codimensional components of $\mathrm{Supp}\, \cH^{k}(Q)$ and if $x \in Z^{k,\circ}$, then there is a canonical isomorphism $\cL^k_{x}= \cH^{k}(Q)_{x}.$ \joc{Taking $Q=\iota^*P$ this proves our claim.}
\end{proof}

\begin{remark}\label{rem:transverseorsplit}
	Notice that, by the support condition (see \cref{subsec:ConventionIC}) for the intersection cohomology complex, we have that
	$\codim \mathrm{Supp}\, \cH^{k}(\IC(\cL)) \geq k+1$. Thus, 
	{for a sufficiently generic closed subvariety $Z$} we have
	$\mathrm{Supp}\, \cH^{k}(\IC(\cL))\bigcap Z $ has codimension at least $k +1$ in $Z$ and therefore it cannot contribute a perverse summand. On the other hand,
	if $\codim \mathrm{Supp}\, \cH^{k}(\IC(\cL))\bigcap Z < k$, then  ${\iota}_*{\iota}^*\jchange{\IC(\cL)}$ is not perverse.
	 So if ${\iota}^*\jchange{\IC(\cL)}$ is perverse on $Z$ then the \jchange{ summands with smaller support occur  because $Z$ is not transversal to the supports of the cohomology sheaves.}
\end{remark}

\jchange{In our application the condition of the above lemma will follow from a relative hard Lefschetz theorem. We formalize this as follows.} 
\begin{definition}
	\label{classS}
	We write $\RHL(X) \subseteq \mathrm{D}^b_c(X)$ for the collection of  semisimple complexes such that there exists an integer $m$ and a decomposition
	\begin{equation}
	K= \bigoplus_{i=0}^{2m} \,\, ^p\!\!{\mathscr H}^{i}(K)[-i]
	\end{equation}
	that is \emph{RHL symmetric} for $m$ in the sense that 
	\begin{equation}
	^p\!\!{\mathscr H}^{m+i}(K)\simeq \, ^p\!\!{\mathscr H}^{m-i}(K)(-i) \hbox{ for every }i=0,\dots, m
	\end{equation}
	where we denoted the $i$-th Tate twist by $(i)$.
\end{definition}

Using this definition, we can state the main result of this section, which allows to describe the restriction of a complex $K$ in $\RHL(X)$ to a closed subscheme $Z$ in terms of the cohomology sheaves of $K$ under the condition that the restriction happens to lie in $\RHL(Z)$.

\begin{proposition}\label{prop:splitperverse}
	Let $X$ be an algebraic variety of dimension $n$, $K \in \RHL(X)$ a complex with no proper supports and $U\subset X$ a dense open subset {small enough to assure that}  $K\simeq \bigoplus \IC(\cL_i)[-i]$ for some local systems $\cL_i$ on $U$.
	
	Assume $\iota:Z \hookrightarrow X$ is a closed locally complete intersection subvariety of codimension $c$ in $X$ such that
	$Z\bigcap U$ is dense in $Z$ and with the property that
	\begin{equation}\label{goodcond}
	\iota^*K\in \RHL(Z).
	\end{equation}
	
	For every $i, k$
	set 
	\begin{equation}
	\widetilde{Z_{i}^k}=  \mathrm{Supp}\, \cH^{k}(\IC(\cL_i)) \bigcap Z
	\end{equation}
	and let $Z_{i}^k$ be the {union of the $k$-codimensional components of $\widetilde{Z_{i}^k}$} in $Z$.
	
	Then we have
	\begin{equation}\label{candec1}
		^p\!\!{\mathscr H}^k({\iota}^*K) \simeq {\iota}^* \, ^p\!\!{\mathscr H}^k(K) = \bigoplus_k \IC(Z_{i}^k, \cL_{i}^k)
	\end{equation}
	where
	\begin{equation}
	\cL_{i}^{k}=\cH^{k}(\IC(\cL_i))
	\end{equation}
	on the dense open set of $Z_{i}^k$ where $\cH^{k}(\IC(\cL_i))$ is a {non-zero} local system.
\end{proposition}
\jchange{The statement \cref{candec1} implies that the restrictions $\iota^*\IC(\cL_i)$ are again perverse sheaves}. In particular all irreducible components of $\widetilde{Z_i}^k$ have codimension $\geq k$. In the special case $k=\dim Z$ the result can therefore be rephrased as follows.
\begin{corollary}\label{cor:point}
	In the hypotheses above, let $p \in Z$ be a closed point. 
	Then $p$ is the support of a summand in the decomposition \cref{candec1} if and only 
	if $\cH^{\dim Z}(\IC(\cL_i))_p \neq 0$ {for some $i$}. 
\end{corollary}

\begin{proof}[\jchange{Proof of \Cref{prop:splitperverse}:}] \jchange{By \Cref{lem:restrIC} we only need to show that the restriction ${\iota}^* \, ^p\!\!{\mathscr H}^k(K)$ is still a perverse sheaf.} 	Applying $c$ times \cite[Corollaire 4.1.10]{bbd},   we have that, for every $i$,
	$$
	{\iota}^* \, ^p\!\!{\mathscr H}^k(K) \in \mathrm{D}^b_c(Y)^{[0,c]}.
	$$
	Suppose $k_0$ is the biggest integer for which 
	${\iota}^* \, ^p\!\!{\mathscr H}^{k_0}(K)$ is not perverse.
	By the symmetry assumption we can assume $k_0 \geq m$. Moreover, as by our semisimplicity assumption 
	$${\iota}^* \, ^p\!\!{\mathscr H}^{k_0}(K)=\oplus_{j=0}^d P_j[-j]$$
	for some perverse sheaves $P_j$\jchange{, a} non-zero summand $P_{j}$ with $j>0$ would contribute a summand in 
	$\, ^p\!\!{\mathscr H}^{k_0 +j}({\iota}^*K)$ which violates the RHL symmetry.
\end{proof}
\begin{remark}\label{purity} In the situation of Proposition \ref{prop:splitperverse}
	assume $K$ is pure of weight $0$ so that by our assumptions $\jcc{\iota}^*K$ is pure of weight $0$ too. 
	Then the local systems $\cL_{i}^{k} $ are pure of weight $i+k$.
	Notice that since ${\cL}_i$ is of weight $i$, by purity we have 
	$$
	\mathrm{weight} (\cH^k(\IC({\cL}_i)))_x \leq i+k,
	$$
	therefore the local systems  $\cL_{i}^{k}$ are the maximal weight quotients of the cohomology sheaves.
\end{remark}

\begin{remark}\label{forex}
	The assumptions of  \cref{prop:splitperverse} are met when  \jchange{$K=\bR f_*\QQ$ for a projective map $f\colon M\to X$ from a smooth variety $M$ such that $\bR f_*\bQ$ has no proper supports and  $Z\subset X$ is a local complete intersection such that $f^{-1}(Z)$ is nonsingular}\joc{, as in this situation the decomposition theorem and the relative hard Lefschetz theorem apply to both $f$ and its restriction $ f_{|f^{-1}(Z)}: f^{-1}(Z) \longrightarrow Z$}. 
\end{remark}

\subsection{The Kodaira--Spencer map for spectral curves}

We want to apply Remark \ref{forex} to compare the cohomology of the Hitchin fibration to the cohomology of relative compactified Jacobians for versal families of spectral curves. To verify the assumptions that $Z$ is a local complete intersection we need to describe the Kodaira--Spencer map for the family of spectral curves over $\ha_n$. {(An introduction to Kodaira-Spencer maps can be found in \cite[Section 3]{TalpoVistoli}, for the general deformation theory see \cite[Chapitre III]{Illusie}.)} 

For any point $a\in \cA_n$ we denote by $\cI_{C_a} \subset \cO_{T^*C}$ the ideal sheaf defining $C_a \subset T^*C$. 
Recall that embedded deformations of $C_a \subset T^*C$ are described by the cotangent complex 
$$ \bL_{C_a/T^*C} =[\cI_{{C_a}}/\cI_{{C_a}}^2 \to 0]$$   
which {is} concentrated in degree $-1$. Considering the composition $C_a \hookrightarrow T^*C \to \Spec k$ we see that the cotangent complex of $C_a$ is 
$$ \bL_{C_a} = \left[\cI_{{C_a}}/\cI_{{C_a}}^2 \to \left(\Omega_{T^*C}|_{C_a}\right)\right].$$
Now the universal spectral curve over $\ha_n$ defines a Kodaira--Spencer map
\[ KS_a\colon T_a\ha_n \to H^1(C_a,\bL_{C_a}^\vee) = \Ext^1(\bL_{C_a},\cO_{C_a}). \]
We know that the $\bG_m$-action on $\cA_n$ and the translation action $H^0(C,\can)\times \ha_n \to \ha_n$ lift to the universal spectral curve $C_{\ha_n}\to \ha_n$ and therefore induce trivial deformations of $C_a$. 
Let us denote by 
\[ \dmult\colon {H^0(C,\cO_C)} \to T_a\cA_n\cong \oplus_{i=1}^n H^0(C,K_C^{\otimes i}) \]
the derivative of the $\bG_m$-action and by
\[ \dshift \colon H^0(C,K_C) \to  T_a\cA_n\cong \oplus_{i=1}^n H^0(C,K_C^{\otimes i}) \]
the derivative of the translation. We will show in \Cref{Lem:KodairaSpencerComputation} below that the span of the image of these maps is the kernel of the Kodaira--Spencer map.

Let us also recall that $S_{n,1}\subset \cA_n$ is the locus of spectral curves that are given by the $n$-th infinitesimal neighborhood of a section in $T^*C$.
\begin{lemma}[Kodaira--Spencer map for $C_a$]\label{Lem:KodairaSpencerComputation}
	\begin{enumerate}
		\item[]
		\item For any point $a\in \ha_n-S_{n,1}$ the kernel of the Kodaira--Spencer { map $KS_a$} is the direct sum of the images of $\dmult$ and $\dshift$, i.e., the map $KS_a$ factors as 
		$$ T_a \cA_n \tto (T_a \cA_n)/(H^0(C,\cO_C\oplus \can)) \hookrightarrow H^1(C_a,\bL_{C_a}^\vee).$$
		\item 	For $a \in S_{n,1} \subset \ha_n$ the kernel of the Kodaira--Spencer { map $KS_a$} is equal the image of $\dshift$, i.e., the map $KS_a$ factors as 
		$$ T_a \cA_n \tto (T_a \cA_n)/(H^0(C,K_C)) \hookrightarrow H^1(C_a,\bL_{C_a}^\vee).$$
	\end{enumerate}
\end{lemma}
\begin{proof}
	Let us first describe the sheaves occurring in $\bL_{C_a}$ more explicitly.
	The cotangent bundle $T^*C$ is the relative spectrum of the $\cO_C$ algebra 
	$$\Sym^\bullet K_C^{\otimes -1} = \oplus_{r=0}^\infty \can^{\otimes -r}$$ and the spectral curve $	C_a\subset T^*C$ is defined by the ideal generated by { the image of the morphism } $K_C^{\otimes -n} \to  \oplus_{r=0}^\infty K_C^{\otimes -r}$ defined by $\alpha \mapsto \alpha + a_1\alpha + \dots + a_n\alpha$. Therefore, denoting by $\pi_a \colon C_a \to C$ the projection we see that
	\[\pi_{a,*} \cO_{C_a} \cong \oplus_{r=0}^{n-1}K_C^{\otimes -r}\]
	and
	\[ \cI_{{C_a}}|_{C_a}= \cI_{C_a}/\cI_{C_a}^2 \cong \pi_a^* K_C^{\otimes -n}.\]

	The dual of the canonical map 
	$$  \bL_{C_a}=  \left[\cI_{{C_a}}/\cI_{{C_a}}^2 \to \left(\Omega_{T^*C}|_{C_a}\right)\right] \to [\cI_{{C_a}}/\cI_{{C_a}}^2 \to 0]= \bL_{C_a/T^*C} $$
	is given by
	$$ [0 \to 	(\cI_{{C_a}}/\cI_{{C_a}}^2)^\vee ] \to [ T_{T^*C}|_{C_a} \to (\cI_{{C_a}}/\cI_{{C_a}}^2)^\vee ].$$
	Note that
	$$  H^0(C_a,(\cI_{{C_a}}/\cI_{{C_a}}^2)^\vee) \cong H^0(C, \oplus_{r=1}^n K_C^{\tensor{r}})=T_a\an$$
	is the space of embedded deformations of $C_a\subset T^*C$.
	
	Taking cohomology of the exact triangle of complexes:
	$$  \to  [0 \to 	(\cI_{{C_a}}/\cI_{{C_a}}^2)^\vee ] \to \bL_{C_a}^\vee  \map{p}  [ T_{T^*C}|_{C_a} \to 0]  \to  $$
	we obtain a long exact sequence:
	\begin{equation}\label{seq:kslong} 0 \to H^0(C_a,\bL_{C_a}^\vee) \map{H^0(p)}  H^0(C_a, T_{T^*C}|_{C_a}  ) \map{\delta} H^0(C_a, (\cI_{{C_a}}/\cI_{{C_a}}^2)^\vee) \map{KS_a} H^1(C_a,\bL_{C_a}^\vee) \to \dots\end{equation}
	To conclude we will compute the dimension of $ H^0(C_a, T_{T^*C}|_{C_a})$ and then compare it to the dimension of the image of $\dmult$ and $\dshift$.
	
	Restricting the relative tangent sequence $0\to \pi^* K_C \to T_{T^*C} \to \pi^* TC \to 0$  on $T^*C$ to $C_a$ we get 
	$$ 0\to \pi_a^* K_C \to T_{T^*C}|_{C_a} \to \pi_a^* K_C^{\otimes -1} \to 0.$$
	Applying $\pi_{a,*}$ and the projection { formula }we find: 
	$$ 0 \to \oplus_{r=0}^{n-1} \can^{\otimes (1-r)} \to \pi_{a,*} (T_{T^*C}|_{C_a}) \to \oplus_{r=0}^{n-1} \can^{\otimes (-1-r)} \to 0.$$
	In particular we see that \[H^0(C_a, T_{T^*C}|_{C_a}  ) \cong H^0(C_a,\pi_a^*K_C)=H^0(C,\can) \oplus H^0(C,\cO_C).\]
	Thus we have an exact sequence:
	\begin{equation}\label{eq:KSmap}
 H^0(C_a,\pi_a^*K_C)=H^0(C,\can) \oplus H^0(C,\cO_C) \map{\delta} T_a\an \map{KS_a} H^1(C_a,\bL_{C_a}^\vee),
	\end{equation}
	where the map $\delta$ is determined by the differential in the cotangent complex $\bL_{C_a}^\vee$.
	 
	Now let us determine the dimension of the image of $\dmult$ and $\dshift$. The $\bG_m$ action is given by the action of weight $i$ on $H^0(C,K_C^{\tensor i})$, so at $a$ the element $c \in \bC=Lie(\bG_m)$ defines the tangent vector $(a_i+ i a_i\cdot\epsilon ) \in T_a\an \subset \an(\bC[\epsilon]/(\epsilon^2))$. 
	
	Similarly as $a$ is given by the coefficients of a characteristic polynomial the translation by an element $\omega \in H^0(C,K_C)$ sends a polynomial $p(t)$ to $p(t-\omega)$. Thus the derivative at $a=(a_i)$ is $(a_i - (n-i+1) \omega  a_{i-1} \cdot\epsilon)$ where we put $a_0:=1$.
	
	In particular these vector fields are linearly independent unless $a_{i}= (-1)^i {n \choose i} \omega^i$, i.e., $a\in S_{n,1}$. This shows that the the kernel of $KS_a$ has dimension $\geq g+1$ for $a\not\in S_{n,1}$. By equation (\ref{eq:KSmap}) we know that the dimension is $\leq g+1$, so this shows the first claim.
	
	If $a$ is the $n-$fold multiple of a section, then the spectral curve $C_a$ admits a continuous family of automorphisms, given by multiplication of the nilpotent generator therefore $ H^0(C_a,\bL_{C_a}^\vee)$ which is the tangent space to the automorphism group of $C_a$ is at least $1$ dimensional. {From the above computation of the image of $\dshift$ we  know that the image of $\delta$ has to be at least $g$ dimensional. Combining these two observations with the exact sequence (\ref{seq:kslong}), we see that both estimates have to be equalities. This implies the second claim.}
\end{proof}

Let us now apply this result to the restriction of the Hitchin fibration to the subset of nodal curves.  We denote by $\overline{\ms{M}}_{d_n}$ the stack of stable curves of genus $d_n=\dim \cA_n$. Then by \cref{rem:bertini} the {flat} universal family of spectral curves $C_{\an}$ induces a morphism 
\[f_{\nod}\colon \an^{\nod} \to  \overline{\ms{M}}_{d_n}.\]
{Recall from \cref{rem:BNRstable} that for any $a\in \cA^{\nod}$ the stability condition for Higgs bundles corresponds to the stability condition for rank $1$ torsion free sheaves on the curve $C_a$ defined by the general polarization $\un{q}=(n_i\cdot(\frac{d}{n}+1-g))_i$. Here $n_i$ \jchange{is the degree of the irreducible component $C_i$ of $C_a$ over $C$. The arithmetic genus of a spectral curve of degree $n_i$ is  $g(C_i)=n_i^2(g-1)+1$ (\joc{see} \ref{dan}) and therefore  $n_i=\sqrt{\frac{g(C_i)-1}{g-1}}$ can be expressed in terms of the genus of $C_i$. Similarly a union of components $C_I=\cup_{i\in I} C_i$ defines a spectral curve of degree $n_I:=\sum_{i\in I} n_i $ over $C$ and therefore we again have $n_I= \sqrt{\frac{g(C_I)-1}{g-1}}$.}
		
This allows us to define a \joc{ compatible family of }polarization\joc{s} on \jchange{the Zariski open neighborhood $\cU\subset \overline{\ms{M}}_{g_n}$ of $f_{\nod}(\an^{\nod})$ parameterizing nodal curves with irreducible components of arithmetic genus equal to $g(C_I)$ for some $I\subset \{1,\dots r\}$, as follows. For $u\in \cU$ corresponding to a curve $C_u$ with irreducible components $(C_j)_{j=1,\dots l}$ of genus $g_j=g(C_{I_j})$ define $q_j:= \sqrt{\frac{g_j-1}{g-1}}\cdot (\frac{d}{n}+1-g)$. As we have seen above, 	the terms  $\sqrt{\frac{g(C_I)-1}{g-1}}$ are integers and $q_j$ is a generic polarization in the sense of \cite{MRV1} \joc{and by construction these polarizations are compatible under deformations of subcurves as in \cite[Definition 5.3]{MRV1}}.}

The following is a consequence of the work of Esteves \cite[Theorem A]{EstevesTAMS} and Melo--Rapagnetta--Viviani \cite[Theorem C]{MRV1}.
\begin{proposition}
	\jchange{Over the open neighborhood } ${\cU} \subset \overline{\ms{M}}_{g_n}$ \jchange{ of $f_{\nod}(\an^{\nod})$ defined above there exists} a regular and irreducible Deligne-Mumford stack ${\pi}\colon \overline{J}_{{\cU}}(\un{q}) \to {\cU}$ that étale locally is a relative compactified Jacobian parametrizing $\un{q}$-stable rank 1 torsion free sheaves.
\end{proposition}
\begin{proof} \jchange{We first observe that it suffices to know that for any point $a\in {\cU}$ there exists an \'etale neighborhood $U \to \cU$ such that a regular and irreducible compactified Jacobian exists over $U$. As these spaces are geometric coarse moduli spaces of the algebraic stack of $\un{q}$-stable rank $1$ torsion free sheaves they are canonically isomorphic on the intersections of these neighborhoods and therefore define an \'etale covering of {a} stack $\overline{J}_{{\cU}}(\un{q})\to {\cU}$. }
		
 \jchange{Now by \cite[Theorem A]{EstevesTAMS}, compactified Jacobians exist  for any family of proper reduced curves, when stability is taken with respect to a polarization induced from a vector bundle over the family of curves; \jc{moreover,}  any polarization is locally of this form (\cite[Remark 2.16]{MRV1}), because \'etale locally one can construct vector bundles having specified degrees on the irreducible components \joc{(e.g., one can use direct sums of line bundles defined by suitable local sections through smooth points of the irreducible components)}.  }

 \jchange{The regularity of $\overline{J}_{{\cU}}(\un{q})$ can be checked locally. By \cite[Theorem C]{MRV1} for any general polarization $\un{q}$ the relative compactified Jacobians $\overline{J}_{\Spec R}(\un{q})$ are regular and irreducible whenever $R$ is the complete local ring given by an effective versal deformation of a reduced locally planar curve and this implies the regularity of  $\overline{J}_{{\cU}}(\un{q})$.}
\end{proof}	

Combining this result with our computation of the Kodaira--Spencer map for the family $C_{\an}$	 (\cref{Lem:KodairaSpencerComputation}) we deduce: 

\begin{corollary}\label{cor:map_to_moduli}
	For every {non-trivial} partition $\ven $ of $n$, let $a \in \sn^{\times}$ (see \cref{rem:bertini}).
	Given a subvariety $ \Sigma_a ${ of $\an^\times$} passing through $a$ and  intersecting $\sn^\times$ transversally, 
	the classifying map $f_{\Sigma_a}\colon\Sigma_a \to {\cU} \subset \overline{\ms{cM}}_{g_n}$ is unramified on an open neighborhood of $a$ in $\Sigma_a$.
	Furthermore we have a cartesian diagram 
	\begin{equation}\label{equ:cartesian}
	\xymatrix{
		{\hn}^{-1}(\Sigma_a) \ar[rd]_{{\hn}_{|\jc{\Sigma_a}}}\ar[r]^{\simeq} &  \overline{J}_{{\cU}}(\un{q})\times_{{\cU}} \Sigma_a \ar[r]\ar[d]  & \overline{J}_{{\cU}}(\un{q}) \ar[d]^{{\pi}}\\
		&\Sigma_a \ar[r]^-{f_{\Sigma_a}}  & {\cU}.
	}
	\end{equation}
\end{corollary}

\begin{proof}
{
The strata $\sn^{\times}$ are invariant under the scaling action of $\bG_m$ and the translation by elements in $H^0(C,K_C)$. Thus, the tangent space of $\Sigma_a$, which is assumed to intersects $\sn^{\times}$ transversally at $a$, will be transversal to the kernel of the Kodaira-Spencer map $KS_a$ at $a$ by \cref{Lem:KodairaSpencerComputation}, i.e., $f_{\Sigma_a}$ is unramified at $a$ and therefore the same holds in an open neighborhood of $a$.    		}
\end{proof}	

\jc{\begin{corollary}\label{cor:compwithversal}With the notation and assumptions of \cref{cor:map_to_moduli} we have for any $k\in \bZ$:
		$${}^p\cH^k(\bR h_{n|\Sigma_a,*}\bQ) \cong f_{\Sigma_a}^*\big( {}^p\cH^k(\bR \pi_*\bQ)\big).$$
	\end{corollary}
\begin{proof}
	As {$\Sigma_a$ and $\cU$ are smooth and} the map $f_{\Sigma_a}$ is unramified \jchange{it is \'etale locally a closed embedding \cite[Tag 04HJ]{stacksproject} of a smooth variety and therefore it is \'etale locally a local complete intersection morphism.}  As perverse cohomology sheaves can be determined étale locally, the complexes $\bR h_{n|\Sigma_a,*}\bQ$ and $\bR \pi_*\bQ$ satisfy the RHL condition (\cref{forex}). \joc{Moreover, we noted in the beginning of this section that \cite[{{Theorem 1.8}}]{MSV} 
	says that the cohomology of compactified Jacobians over $\cU$, which is a versal family of nodal curves, has full support, so that} the restriction result for semisimple complexes \cref{prop:splitperverse}(\ref{candec1}) thus applies to $f_{\Sigma_a}\colon \Sigma_a \to \cU$.
\end{proof}}

\section{The partition strata are supports}\label{sec:sn_supports}

{Our next aim is to use the restriction result of the previous section to show that the strata $S_{\ven}$ are supports and to describe the local systems that give rise to the summands supported on $S_{\ven}$. The starting point is the main result of \cite{MSV} that shows that for a versal family of nodal curves the corresponding family of compactified Jacobians has full supports and moreover the corresponding $IC$ complexes have a rather explicit description in terms of the Cattani--Kaplan--Schmid (CKS) complex. Applying the restriction theorem of the previous section to this explicit description we reduce the computation to a combinatorial problem, that can be formulated in terms of matroids and drawing from results on matroids we can then conclude our main result.}

{Throughout this section we will consider the following setup. We will denote by $\pi\colon \cC \to B$ a flat projective versal family of locally planar curves and by $U \subset B$ the open subscheme over which the morphism $\pi$ is smooth. The local system over $U$ defined by the first cohomology group of the fibers of $\pi$ will be abbreviated as
\begin{equation*}
R^1:=(\bR^1\pi_*\QQ)_{|U}.
\end{equation*}
In this setting the main theorem of \cite{MSV} reads as follows.}
\begin{theorem}[{\cite[Theorem 5.1{2}]{MSV}}]\label{thm:hd_cj}
	Let $\pi:\cC \to B$ be a projective versal family of curves with locally planar singularities and arithmetic genus $g$, and let  $\pi^J:\ov{J}_{\cC}\to B$ 
	be a relative fine compactified Jacobian. 
	Then we have 
	\begin{equation*}
	\bR \pi^J_* \QQ= \bigoplus_{i=0}^{2g} \IC(\bigwedge^i R^1)[-i],
	\end{equation*}
	i.e., the complex $\bR \pi^J_* \QQ$ has no proper supports on $B$.	
\end{theorem}

\subsection{Description of the $\IC$ complexes for families with full support}
{In order to use this we now recall the explicit description of $\IC(\bigwedge^i R^1)$ in terms of the CKS complex}  introduced in \cite{cks, KK} {and described for deformations of nodal curves in \cite[Section 3]{MSV}.} 
	
{Let us first recall the general result.} Assume $B$ is a complex manifold of dimension $n$, $D \subset {B}$ is a normal crossing divisor $D$, and
$\cL$ is a local system on 
$B \smallsetminus D$  with  unipotent monodromies $\{T_i\}$ around the components of $D$. We work locally, near a point $p \in {D}$. After picking a holomorphic chart $U \subset B$ in a neighborhood of $p$, we may assume $p$ to be the origin in a polydisc $\Delta^n$ and the divisor $D$ to have equation $\prod_{i=1}^l z_i=0$. Thus $U \bigcap (B \smallsetminus D)\simeq (\Delta^*)^l \times \Delta^{n-l} $, where $\Delta^*$ is the punctured unit-disc.
Up to taking a slice transverse to the stratum of $D$ to which $p$ belongs, we may assume $l=n$, and denote $i_p: \{*\} \to B$ the closed embedding. The local system on  $(\Delta^*)^n$ is described by the stalk at a base point, a vector space $L$, and $n$ commuting nilpotent endomorphisms $N_i =\log T_i: L \to L$.
Given a subset $\{i_1, \cdots i_k\}=I \subset \{1, \cdots , n\}$, with $1 \leq i_1 <i_2<\cdots ,< i_k \leq n$, we set $N_I= N_{i_1}N_{i_2}\cdots N_{i_k}$, where we remark that the order of the composition doesn't matter as the endomorphisms commute. We denote  by $|I|$ the cardinality of $I$, and consider the complex  
\begin{equation}\label{cks_general}
{\mathbf C}^{\bullet}( \{ N_j \}, L):=\{
0 \to  L \to \bigoplus_{|I|=1} \im N_I  \to \bigoplus_{|I|=2} \im N_I \to \cdots \to  \im N_{\{1, \cdots , n\}}  \to 0 \},
\end{equation} 
where $L$ is in degree zero, and
where the differentials are given up to the standard signs by
\begin{equation}\label{diff_cks}
N_r:\im N_{i_1}\cdots N_{i_k}\to \im  N_r N_{i_1}\cdots N_{i_k} \hbox{ if } r \notin \{i_1, \cdots , i_k\}.
\end{equation}
If the local system underlies a variation of pure Hodge structures of weight $k$, 
${\mathbf C}^{\bullet}( \{ N_j  \}, L)$ is in a natural way a complex of mixed Hodge structures (\cite[\S 4]{KK}) 
isomorphic to $i_p^*\IC(\cL)$ \cite[Corollary 3.4.4]{KK}, and
its cohomology  sheaves 
$\cH^r(i_p^*\IC(\cL))$ have a natural mixed Hodge structure and \jchange{its  weight filtration} satisfies: 
\begin{equation}\label{e:purity}\cH^r(i_p^*\IC(\cL))=W_{r+k}\cH^r(i_p^*\IC(\cL)),
\end{equation}
(\cite[Theorem 4.0.1]{KK}).

\begin{remark}
The weight filtration used in \cite{KK} differs by a shift from  the one used in \cite{cks}.
The one in \cite{KK} gives the statement in the form of \cref{e:purity}, which is compatible with the MHS on the fibre via the decomposition theorem,
while the one in \cite{cks} gives  
$\cH^r(i_p^*\IC(\cL))=W_{k}\cH^r(i_p^*\IC(\cL))$, \cite[Corollary 1.13]{cks}.
\end{remark}

We describe the complex of mixed Hodge structures
${\mathbf C}^{\bullet}( \{ N_j  \}, L)$  in the case of a family of relative Jacobians associated with a family of stable curves.

We start from a stable nodal  curve $C_\times$, of arithmetic genus $g$ and dual graph $\Gamma$.
{We abbreviate $\delta^{\aff}:=\delta^{\aff}(C_\times)$}, which, since $C_\times$ is connected, equals $\dim H^1(\Gamma)$  by \cref{deltaff_nodes}.
Let $\widetilde{p}:\widetilde{C}_{\times} \to {C}_\times$
be the normalization map.
\jc{Let $\ms{M}_g \subseteq \ov{\ms{M}_g}$ be the moduli stack of semistable curves of genus $g$.}
Let ${B}$ be an {\'etale} neighborhood  of $[C_\times] \in \overline{\ms{M}_{g}}$ on which there exists a universal family $\pi\colon\cC \to {B}$, and let $D$ be the preimage of the boundary divisor $\overline{\ms{M}_{g}} \smallsetminus  \ms{M}_{g}$ in ${B}$. 

If ${B}$ is small enough, the irreducible components of $D$ are in natural one-to-one correspondence with the nodes of $C_\times$: given a node $e$, the general point of the corresponding irreducible component $D_e$ of $D$ corresponds to a curve of the family where the node $e$ persists while the other nodes are smoothed.
On $U:=B\smallsetminus D$ we have
the local system 
\begin{equation*}
R^1:=(R^1\pi_*\QQ)_{|U}.
\end{equation*}
 {T}he stalk {of $R^1$} at a base point $\eta \in U$ {is} $H^1(\cC_\eta,\bQ)$, a rational vector space of dimension 
 \jchange{$2g(\cC_\eta)$} endowed with a family  of unipotent commuting endomorphisms: for each node $e$ there is  $\{T_e\}_{e }$, the monodromy around the component $D_e$. 
The space $H^1(\cC_\eta,\bQ)$ is endowed with the weight filtration 
\begin{equation}\label{eq:weight_psi}
W_0=H^1(\Gamma,\bQ)\subseteq W_1=H^1(C_\times,\bQ) \subseteq W_2=H^1(\cC_\eta,\bQ),
\end{equation}
described as follows:

The first inclusion is induced by the short exact sequence of sheaves on $C_\times$ associated with the normalization map:
\begin{equation}\label{eq:NormalizationSequence}
0 \to {\bQ}_{C_\times} \to \widetilde{p}_* {\bQ}_{\widetilde{C}_\times} \to \oplus_{p \in \nodes(C_\times)} \bQ_{p} \to 0,
\end{equation}
 which induces the sequence: 
$$ 0 \to  H^1(\Gamma,\bQ) \to H^1(C_\times, \bQ) \to H^1(\widetilde{C}_\times,\bQ) \to 0,$$
giving the weight filtration of the Mixed Hodge structure $H^1(C_\times)$.
The second inclusion in \cref{eq:weight_psi} is induced by the exact sequence arising from the specialization \jchange{ sequence that computes $H^*(C_{\eta},\bQ)$ as cohomology of nearby cycles on $C_\times$ (see \cite[Section 3.0.2]{MSV})}:
$$ 0 \to  H^1(C_\times, \bQ) \to H^1({\cC}_\eta,\bQ) \to H_1(\Gamma,\bQ)(-1)\to  0.$$
The graded quotients associated with the filtration \cref{eq:weight_psi} are:
\begin{eqnarray*}
Gr^W_0 H^1 ( \mathcal{C}_{{\eta}}, \bQ) & = & H^1(\Gamma, \bQ), \\
Gr^W_1 H^1 ( \mathcal{C}_{{\eta}}, \bQ) & = & H^1(\widetilde{C}_\times, \bQ), \\
 Gr^W_2 H^1 ( \mathcal{C}_{{\eta}}, \bQ) & = & H_1 (\Gamma, \bQ)(-1).
\end{eqnarray*}
Here $H^1(\Gamma, \bQ)$ and $H_1(\Gamma, \bQ)$ are endowed with a pure Hodge-Tate structure of type $(0,0)$.

More precisely, if $\bE$ (resp. $\bV$) denote the vector space generated by the edges (resp. the vertices) of the dual graph, {choosing an orientation of the edges} we {obtain} the complexes
\begin{equation}\label{eq:graph_hom}
0 \to \bE \to \bV \to 0, \qquad 0 \to \bV^* \to \bE^* \to 0,
\end{equation}
computing respectively the homology and the cohomology of $\Gamma$, so that we identify $H_1(\Gamma, \bQ)$ with a subspace of $\bE$ and
$H^1(\Gamma, \bQ)$ with a quotient of $\bE^*$.
For every node $e$ there is an operator $N_e:=\jc{\log(T_e)}\colon H^1(\cC_\eta,\bQ) \to H^1(\cC_\eta,\bQ)\jc{(-1)}   $, which, 
by the Picard--Lefschetz formula, factors as
\begin{equation}\label{eq:PL_formula}
H^1(\cC_\eta,\bQ) \tto H_1(\Gamma,\bQ)\jchange{(-1)} {\map{N_e^\prime}} H^1(\Gamma, \bQ)\jc{(-1)}\subset H^1(\cC_\eta,\bQ)\jc{(-1)},
\end{equation}
and is given by
\begin{equation}\label{E:mapNe}
N_e^{{\prime}}\colon H_1(\Gamma, \bQ) \jchange{(-1)} \hookrightarrow \EE \xrightarrow{t \mapsto \langle {e}^*, t \rangle \cdot {e}^*} \EE^*  \twoheadrightarrow H^1(\Gamma, \bQ)\jc{(-1)}
\end{equation}
where $e$ is an orientated edge $e^*$ is its dual element in  $\EE^*$ (note that the  formula above for $N_e$ is independent of the {choice of} orientation of $e$).
\begin{remark}\label{rem:limitMHS}
With every one-dimensional family of nonsingular curves degenerating to $C_\times$ is associated
a limit mixed Hodge structure (\cite{Schmid, Steenbrink}) on the rational cohomology of a general fibre, more canonically on the cohomology of the nearby fibre. It follows from the above Picard--Lefschetz formula (\cref{E:mapNe} and \cref{eq:PL_formula}) that the filtration on $H^1(\cC_\eta,\bQ)$  defined above (\cref{eq:weight_psi}) coincides with the weight filtration of the limit mixed Hodge structure with respect to any smoothing family. The factorization \cref{eq:PL_formula} corresponds to the fact that the logarithm of monodromy is an endomorphism of type $(-1,-1)$ on the limit mixed Hodge structure \cite[Theorem 6.16]{Schmid}. 
\end{remark}

The direct image local systems for the relative compactified Jacobian family over $U$ are the exterior powers $\bigwedge^iR^1$ for $i=0,\cdots,2\jchange{g(\cC_\eta)}$. For every subset $I$ of edges we have the operators $N_I$ on $\bigwedge^i H^1(\cC_\eta, \bQ)${, induced by the operators $N_e$}.
The restriction of the intersection complex of $\bigwedge^iR^1$  to the point of $B$ {that corresponds to the nodal curve $C_\times$}  is computed by the complex 
${\mathbf C}^{\bullet}( \{ N_j \}, \bigwedge^i H^1(\cC_\eta\jc{,\bQ}))$ (\cref{cks_general}).  
The weight filtration on $\bigwedge^i H^1(\cC_\eta, \bQ)$ induced by the one on $H^1(C_\eta,\bQ)$ has a highest weight quotient given by:
\begin{align}\label{eq:weight_exterior}
&Gr_{2i}^W(\bigwedge^i H^1(\cC_\eta, \bQ))= (\bigwedge^i H_1(\Gamma, \bQ))(-i) && \hbox{ if }i < \delta^{\aff},  \\ 
&Gr_{i+\delta^{\aff}}^W(\bigwedge^i H^1(\cC_\eta, \bQ)))= (\bigwedge^{\delta^{\aff}} H_1(\Gamma, \bQ))(-\delta^{\aff})\otimes \bigwedge^{i-\delta^{\aff}}H^1(\widetilde{C}_{\times},\bQ) &&\hbox{ for } \jcc{\delta^{\aff} \leq i \leq 2g(C_\eta)-\delta^{\aff}}.
\end{align}

We recall a key observation from \cite{MSV}, \jchange{which is a simple linear algebra computation using the explicit formula for the $N_e$:} 
\begin{lemma}[{{\cite[Lemma 3.6]{MSV}}}]\label{lem:bond_matroid}
The vector space $\im \, N_I \subseteq \bigwedge^i H^1(\cC_\eta, \bQ)\jc{(-|I|)}$ is non-zero only if $|I|\leq i$ and $\Gamma \smallsetminus I$ is connected. 
In particular, $\im \, N_I =0$ if $|I|> \delta^{\aff}= \dim H^1(\Gamma,\bQ)$ for every $i$.

{Moreover, if $\Gamma\smallsetminus I$ is connected, the highest weight \jchange{quotient} $\Gr^W_{\tp}(\im \, N_I)$  of $\im \, N_I $ is isomorphic to 
$$\begin{array}{ll}
\Gr^W_{2i}(\im \, N_I) \cong \left(\bigwedge^{i -|I|} H_1(\Gamma\smallsetminus I,\bQ)\right)(-i) & \text{if } i\leq \delta^{\aff} \\
\Gr^W_{i+\delta^{\aff}}(\im \, N_I)\cong \left(\bigwedge^{\delta^{\aff}-|I|} H_1(\Gamma\smallsetminus I,\bQ)\right)(-\delta^{\aff}) \tensor  \bigwedge^{i-\delta^{\aff}}H^1(\widetilde{C}_{\times},\bQ) & \text{if }\delta^{\aff} \leq i \leq 2g(C_\eta)-\delta^{\aff}
\end{array}$$}
\end{lemma}
\begin{remark} \jc{For $2g(C_\eta)-\delta^{\aff}<i\leq 2g(C_\eta)$ the \jcc{highest weight quotients} are most easily described through the relative hard Lefschetz theorem \jcc{and \Cref{thm:hd_cj}}, as Tate twist of the $i^\prime$-th exterior power  for $i^\prime=2g(C_\eta)-i<\delta^{\aff}$. In this case the highest weight is $2g(C_\eta)$.}
\end{remark}

Lemma \ref{lem:bond_matroid} justifies the following definition:
\begin{definition}\label{def:GraphComplex}
Given a connected graph $\Gamma$, with set of edges $\rE$,
	we write $\mathscr{C}(\Gamma)$ for the collection of subsets of $\rE$ whose removal does not disconnect $\Gamma$.
	In other words, a subset $I\subseteq \rE$ belongs to $\mathscr{C}(\Gamma)$ if and only if $\Gamma\smallsetminus I$ is connected. 	
\end{definition} 
\begin{remark}\label{rem:matr}
In the literature {(e.g. \cite{bjorner}) the collection} $\mathscr{C}(\Gamma)$ is {called} the family of independent subsets in what is known as the {\em bond, or cographic, matroid } of the graph $\Gamma$. 
The set $\mathscr{C}(\Gamma)$ is partially ordered with respect to inclusions and we denote by $|\mathscr{C}(\Gamma)|$ the associated simplicial complex, i.e.,
the complex whose $k$-dimensional faces are the $(k+1)$-tuples of edges belonging to $\mathscr{C}(\Gamma)$. 
\end{remark}

The rank of $\mathscr{C}(\Gamma)$ is the cardinality of the complement of a spanning tree, namely
$|\rE|-|\rV|+1$, which, in the case of the dual graph of a nodal curve $C_\times$ equals 
$ \delta^{\aff}(C_\times)$, hence the simplicial complex {$|\mathscr{C}(\Gamma)|$ is of dimension $\delta^{\aff}(C_\times)-1$}. \jc{We denote \jcc{the} chain complex computing the reduced cohomology of $\mathscr{C}(\Gamma)$ by $\widetilde{\cC^\bullet(\Gamma)}$. }
%

Let us summarize:
\begin{proposition}\label{prop:highest_weight_cohomology}
Let $C_\times$ be a nodal stable curve, {$\Gamma$ its dual graph and $\delta^{\aff}:=\delta^{\aff}(C_\times)$. Let $B$ be an {\'etale} neighborhood  of $[C_\times] \in \overline{\ms{M}}_{g}$ on which there exists a universal family $\pi\colon\cC \to {B}$ and small enough such that the preimage $D$ of the boundary divisor $\overline{\ms{M}}_{g} \smallsetminus  \ms{M}_{g}$ in ${B}$ has irreducible components indexed by the nodes of $C_\times$. Then the fiber of $\IC(\bigwedge^i R^1)$ at the point $[C_\times]$ is given by the CKS complex ${\mathbf C}^{\bullet}( \{ N_j \}, \bigwedge^i H^1(\cC_\eta\jc{,\bQ}))$ (\cref{cks_general}) and }for every $i=0,{\dots,}2g\jc{(\cC_\eta)}$ we have
\jchange{
	\begin{enumerate}
		\item $H^{r}({\mathbf C}^{\bullet}( \{ N_j \}, \bigwedge^i H^1(\cC_\eta\jc{,\bQ})))=0 \text{ for } r > \min\{ i, \delta^{\aff}\}.$
		\item\label{item:highestweight}
		For $\delta^{\aff} \leq i \leq 2g(C_\eta)-\delta^{\aff}$ the highest weight quotient of ${\mathbf C}^{\bullet}( \{ N_j \}, \bigwedge^i H^1(\cC_\eta\jc{,\bQ}))$ is given by
		$$	Gr_{i+\delta^{\aff}}^W({\mathbf C}^{\bullet}( \{ N_j \}, \bigwedge^i H^1(\cC_\eta\jc{,\bQ})))= 
			\bigwedge^{i-\delta^{\aff}}H^1(\widetilde{C}_\times\jc{,\bQ})\otimes \widetilde{\cC^\bullet(\Gamma)}(-\delta^{\aff})$$
	\end{enumerate}}
\end{proposition}

\begin{proof}
The first statement follows from \cref{lem:bond_matroid}\jc{, as $\im N_{I}=0$ for $|I|>\delta^{\aff}$ and $N_I=0$ for $|I|>i$}.

\jc{To prove (\ref{item:highestweight}) we use the description of the highest weight quotient given in \cref{lem:bond_matroid}. As the differentials in the complex were defined in terms of the operators $N_e$ the highest weight quotient $Gr_{i+\delta^{\aff}}^W({\mathbf C}^{\bullet}( \{ N_j \}, \bigwedge^i H^1(\cC_\eta,\bQ)))$ for $i>\delta^{\aff}$ is the tensor product of the corresponding quotient for $i=\delta^{\aff}$ with $\bigwedge^{i-\delta^{\aff}}H^1(\widetilde{C}_\times,\bQ)$, and it therefore suffices to consider the case $i=\delta^{\aff}$.}

\jc{In this case observe that $\bigwedge^{\delta^{\aff}} H_1(\Gamma,\bQ)$ is one-dimensional,  since $\delta^{\aff}=\dim H_1(\Gamma,\bQ)$. Similarly, for any $I\in \cC(\Gamma)$ the space $\bigwedge^{\delta^{\aff}-|I|} (H_1(\Gamma-I,\bQ))$ is also one dimensional. So for every $k$, the degree $k$ part of the complex has a basis consisting of the non-disconnecting cardinality $k$ subsets of the edges set, namely precisely the $(k-1)$-cells of $|\mathscr{C}(\Gamma)| $. It is easy to check that the boundary maps coincide with the maps of the complex $\widetilde{\cC^\bullet(\Gamma)}$ that computes the reduced cohomology of $|\mathscr{C}(\Gamma)|$, which proves (\ref{item:highestweight}).}
\end{proof}
\begin{remark}\label{rem:purity}
\jc{As the local system $\wedge^i R^1$ is a pure local system, the intersection cohomology complex $\IC(\wedge^i R^1)$ is a pure complex and therefore the cohomology sheaves in degree $k$ are of weight $\leq i+k$. Thus  the cohomology of the highest weight quotient described in (\ref{item:highestweight}) above is concentrated in the top degree. We will see below (\cref{shellable_mat}, \cref{repsym}) that results on matroids allow to compute the cohomology of $|\cC(\Gamma)|$ directly and in particular to deduce that the cohomology in the top degree is non-zero.} 
\end{remark}

\subsection{Description of the summands supported on the partition strata}

{Before we apply the results let us recall a well known,} elementary estimate:
\begin{lemma}\label{lem:estimate}
Let $\pi: \cC \to B$ be a flat projective family of locally planar reduced curves of arithmetic genus $g$ such that  
compactified Jacobian family $\pi^J: \ov{J}_{\cC} \to B$, 
relative to a choice of a fine polarization exists and has nonsingular total space. Let ${U} \subset B$ be a dense open set such that  the restriction
$\pi\colon \cC_{{U}} \to {U}$ is smooth, and denote by $R^1$ the local system on ${U}$:
$$
R^1:={R^1\pi_* \QQ}_{|{U}}.
$$
Then
\begin{equation}\label{equ: estimate}
\cH^r(\IC(\bigwedge^i R^1 ))=0 \text{ for } r>i.
\end{equation}

\end{lemma}
\begin{proof}
It follows from the decomposition theorem for $\pi^J: \ov{J}_{\cC} \to B$ that {for $i=0,\dots 2\jc{g(\cC)}$ the complex }$\IC(\bigwedge^i R^1 ))[-i]$ is a direct summand in $\bR \pi^J\QQ$:
\begin{equation}\label{eq:dtpi}
\IC(\bigwedge^i R^1 ))[-i] \subset \bR \pi^J_{{*}}\QQ
\end{equation}
Assume $\cH^r(\IC(\bigwedge^i R^1 ))_b\neq 0$ for $b\in B$ and some $r >i$. It then follows from the relative Hard Lefschetz theorem that we may assume $i\geq g\jc{(\cC)}$.
Taking stalks of the cohomology sheaf $\cH^{r+i}$ at $b$ in the above \cref{eq:dtpi} we have
\begin{equation*}
0 \neq \cH^{r}(\IC(\bigwedge^i R^1 ))_{{b}}\subset H^{r+i}(\ov{J}_{\cC}(b)\jc{,\bQ}).
\end{equation*} 
which is a contradiction since $r+i>2i \geq 2g\jc{(\cC)}=2\dim \ov{J}_{\cC}(b)$.
\end{proof}

\jc{We can now prove one of the main results of this paper. To state the result let us recall that for a partition $\un{n}=(n_1,\dots,n_r)$ of $n$ we introduced the stratum $S_{\ven}^\times \subset \cA_n$ parametrizing spectral curves with $r$ smooth irreducible components $C_i$ of degree $n_i$ over $C$ intersecting transversally in $n_in_j(2g-2)$ nodes (\cref{lem:deltasn}) and the dual graph of these curves is denoted by $\Gamma_{\ven}$ (\cref{not:Gamma-n}).}
\begin{theorem}\label{thm:main1}
Let $\hn:\mn \to \an$ be the Hitchin map.
For every partition $\ven$ of $n$, the stratum $\sn$ is a support for all the sheaves
\begin{equation}\label{eq:range_supp}
^p\!\!{\mathscr H}^{{\jc{i}}}(\bR  {\hn}_{*}\QQ) 
\hbox{ with } \delta^{\aff}(\ven) \leq \jc{i} \leq {2\dim \an -\delta^{\aff}(\ven)} 
\end{equation}
More precisely, for every $\jc{i}$ in the range of (\ref{eq:range_supp}), there is a direct summand in 
$^p\!\!{\mathscr H}^{\jc{i}}(\bR  {\hn}_{*}\QQ) $ which is the intermediate extension of the 
local system $\cL_{\jc{i}, \ven}$  on the open set $ {\snode} \subset \sn$, 
whose stalk at a point $a\in \snode$ is 
\begin{equation*}\label{eq:loc_sys}
({\cL_{\jc{i}, \ven}})_a  = H^{\delta^{\aff}(\ven)-1}\left( |\mathscr{C}(\Gamma_{\ven})|,\bQ \right)(-\delta^{\aff}{(\ven)})\otimes \bigwedge^{\jc{i}-\delta^{\aff}{(\ven)}}H^1(\widetilde{C}_a\jc{,\bQ})
\end{equation*}
and underlying a variation of pure Hodge structures of weight $\jc{i}+\delta^{\aff}(\ven)$.
\end{theorem}

\begin{remark}
Theorem \ref{thm:main1} holds in the context of M. Saito's mixed Hodge modules (see \cite[Appendix]{drs}).
In particular, the resulting direct summands of  the  pure Hodge structures given by the cohomology groups $H^k(\mn,\QQ)$ (these are pure since they coincide with the cohomology of the nilpotent cone (fiber of the Hitchin map over the origin)\jchange{, see e.g. \cite[Theorem 1]{Heinloth} for a short argument}) are pure Hodge substructures. 
\end{remark}
\jc{\begin{remark}\label{rem:inductivedescription}
		The local systems ${\cL_{\jc{i}, \ven}}$ that determine the summands supported on $\sn$ arise as a tensor product. The monodromy of the combinatorial part $H^{\delta^{\aff}(\ven)-1}\left( |\mathscr{C}(\Gamma_{\ven})|,\QQ \right)$ turns out to be finite and we will determine it in \cref{cor:rank_loc_syst}. The cohomology of $\widetilde{C}_a$ is the sum of the cohomology groups of the irreducible components which are the generic spectral curves for the Hitchin fibration for bundles of rank $n_1,\dots,n_r$, so that the $\IC$ complexes of these local systems appear in the Hitchin fibration for the Levi subgroup $\prod \GL_{n_j} \subset \GL_n$. 
\end{remark}}

\begin{proof}
\jc{We have seen in \cref{prop:SuppOnlySn} that} either 
$^p\!\!{\mathscr H}^{\jc{i}}(\bR  {\hn}_{*}\QQ)$ has a summand which is {\em fully} supported at  $\sn$ or none of its summands intersect $\sn^{\jc{\times}}$, therefore
it is enough to consider a general point  $a\in \sn^{\jc{\times}}$ corresponding to a nodal spectral curve $C_a$ with $\jc{r}$ smooth components. Let $\Sigma_a$ as in  \cref{cor:map_to_moduli} be a transversal slice to $\sn$ at $a$. Since $\sn$ has codimension $\delta^{\aff}(\ven)$, we have $\dim \Sigma_a=  \delta^{\aff}(\ven)$. Furthermore,
by transversality, $\hn^{-1}(\Sigma_a)$ is nonsingular, and we have the diagram (\ref{equ:cartesian}). Let \jc{$\cU^{sm}\subset \cU$} be {the }open set where the universal curve 
$\pi: \cC_{\cU} \to \cU$ is smooth, and denote as before by $R^1$ the local system 
$$
R^1:={R^1\pi_* \QQ}_{\jc{\cU^{sm}}}.
$$
Since the family $\pi: \cC_{\jc{\cU}} \to \jc{\cU}$ is versal, we have, by \cite[Theorem 5.11]{MSV}, which we recalled in  \cref{thm:hd_cj}, that 
\begin{equation}\label{equ:dec_thm_vsl}
\bR \pi^J_* \QQ \simeq \bigoplus_{i=0}^{2\dn} \IC(\bigwedge^iR^1)[-i]. 
\end{equation}
By proper base change and the isomorphism in Diagram (\ref{equ:cartesian}) we have that
\begin{equation}
\mc{({\bR {\hn}_* \QQ})_{|\Sigma_a}   \simeq  \bR (\hn|_{\Sigma_a})_* \QQ} \simeq {f_{\Sigma_a}^*}(\bR \pi^J_* \QQ )
\end{equation}
is split semisimple. \jc{From \cref{cor:compwithversal} we also know that}
\begin{equation}\label{equ:perv_coh}
^p\!\!{\mathscr H}^{\jc{i}}({\bR {\hn}_* \QQ}_{|\Sigma_a}) = ^p\!\!{\mathscr H}^{\jc{i}}({f_{\Sigma_a}^*}\bR \pi^J_* \QQ) = {f_{\Sigma_a}^*} \IC(\bigwedge^iR^1).
\end{equation}
By stratification theory it is clear that
$\sn$ is a support for $^p\!\!{\mathscr H}^{\jc{i}}({\bR {\hn}_* \QQ})$ if and only if $a$ is a support for $^p\!\!{\mathscr H}^{\jc{i}}({\bR {\hn}_* \QQ}_{|\Sigma_a})$.
Since $\dim \Sigma_a=  \delta^{\aff}(\ven)$, by \cref{cor:point} and \cref{equ:perv_coh}, this happens if and only if
\begin{equation}\label{equ:new_sprt}
\cH^{\delta^{\aff}(\ven)}(\IC(\bigwedge^iR^1))_a \neq 0.
\end{equation}
By \cref{lem:estimate} \jc{ and the relative hard Lefschetz isomorphism} this is possible only if $\jc{\delta^{\aff}(\ven)\leq i\leq 2\dim \an - \delta^{\aff}(\ven)}$. On the other hand, 
when this is the case, \jc{\cref{prop:highest_weight_cohomology} \jcc{and \cref{rem:purity} }tell us that 
$$\cH^{\delta^{\aff}(\ven)}(\IC(\bigwedge^iR^1))_a \cong \bigwedge^{i-\delta^{\aff}}H^1(\widetilde{C}_a,\bQ)\otimes H^{\delta^{\aff}-1}(|\cC(\Gamma_{\ven})|,\bQ)(-\delta^{\aff}).$$  
This gives the claimed formula for the local system $\cL_{i,\ven}$ and to conclude we only need to observe that these are non-trivial. We already know that $\dim H^1(\widetilde{C}_a,\bQ)=2(g(C_a)-\delta^{\aff}(C_a))=2 \dim \cA_n - 2\delta^{\aff}(\ven)$ (\cref{lem:deltasn}), so that the contribution of this group is non-trivial for $i$ in the given range. The non-vanishing of the cohomology of the combinatorial complex $\cC(\Gamma_{\ven})$ will be recalled in \cref{repsym}.}
\end{proof}
\begin{corollary}\label{cor:fin_mon}
For every $\ven$, and for ${k}=\delta^{\aff}(\ven)$ \jc{or} ${k}={2\dim(\an)- \delta^{\aff}(\ven)}$, the pull back of the local systems 
${\cL_{{k}, \ven}}$ to \jc{the preimage $\cA^{\nod}_{\un{n}}$ of $\sn^\times$ in $\cA_{\ven}$} has trivial monodromy . 
\end{corollary}	
\begin{proof}
The irreducible components of the Jacobian of the spectral curve $C_a$ are indexed by the degrees of the restriction of the line bundles to the components of $C_a$. 
Therefore, the sheaf of irreducible components of $C_a$ is constant on  $\cA^{\nod}_{\un{n}}$. The local system 
\[
({\cL_{{2\dim(\an)- \delta^{\aff}(\ven)}, \ven}})_a  = H^{\delta^{\aff}(\ven)-1}\left( |\mathscr{C}(\Gamma_{\ven})|\jc{,\bQ} \right)(-\delta^{\aff}{(\ven)})
\otimes \bigwedge^{\jc{2 \dim \cA_n - 2\delta^{\aff}(\ven)}}H^1(\widetilde{C_a}\jc{,\bQ})
\]
\jc{appears in the top cohomology of $\bR {\hn}_* \QQ$ and} \jc{it is} a subsheaf of the $\QQ$-linearizaton of the sheaf of irreducible components \jc{(see  \cref{lem:SuppOnlySn})}, therefore its pullback to  $\cA^{\nod}_{\un{n}}$
is constant. This is also true for ${\cL_{\delta^{\aff}(\ven), \ven}}$ which is isomorphic to it (up to a Tate twist). 
\end{proof}
\begin{remark}\label{rem:fin_mon}
		The generic Galois group of the finite map 
	$\mult_{\un{n}}: \cA^{\nod}_{\un{n}} \longrightarrow S^{\nod}_{\un{n}}$
	is the subgroup of the symmetric group $S_r$ stabilizing the partition $\ven$ of $n$. Writing $\ven=1^{\alpha_1}\cdots n^{\alpha_n}$, i.e. letting $\alpha_i$ be the number of elements in $\ven$ equal to $i$,
	this subgroup is 
	\begin{equation}\label{subgroup}
	\prod_i S_{\alpha_i} \subseteq S_r.
	\end{equation}
	In particular the sheaves $\cL_{\delta^{\aff}( \ven), \ven}$ and  
		$\cL_{{2\dim\an - \delta^{\aff}(\ven)}, \ven}$ are constant if $n_i\neq n_j$ for all $i\neq j$.
\end{remark}
{We are now left to compute the rank of the local systems $\cL_{{k}, \ven}$ and determine their monodromy in order to show that \jc{some summands} do contribute to the cohomology of $\mn$.}

\subsection{The monodromy and the rank of the new local systems}\label{sec:mon_loc_sys}

{We start by the computation of the cohomology of the complex $|\cC(\Gamma_{\ven})|$ appearing in \cref{thm:main1}.
Recall from \cref{not:Gamma-n} that $\Gamma_{\ven}$ is a graph with multiple edges between any two vertices. 
As we remarked in \cref{rem:matr} the poset $\cC(\Gamma_{\ven})$ is by definition the collection of independent subsets of the bond matroid of the graph $\Gamma_{\ven}$ (for terminology about matroids see \cite{white}, but we will try to spell out the notions we use in the case we need).}  

For any matroid $M$ the simplicial complex $|\In(M)|$ of its independent subsets has special properties:
\begin{theorem}\cite[Theorem 7.3.3, Theorem 7.8.1]{bjorner}\label{shellable_mat}
The simplicial complex $|\In(M)|$ of independent subsets associated with a rank ${\delta}$ matroid 
 has the homotopy type of a bouquet of 
${\delta}-1$-dimensional  spheres.
\end{theorem}

%


%




Let us denote by $\Gamma_r$ the complete graph on $r$ vertices for $r\geq 2$. As before we will denote by $|\mathscr{C}(\Gamma_r)|$ the simplicial complex defined by the cographic matroid of $\Gamma_r$, i.e., its $k$-simplices are the subsets of $k+1$ edges of $\Gamma_r$ that do not disconnect the graph.    

The following result is a combination of well known results on matroids and a result of Stanley.
\begin{proposition}\label{repsym}
	For any $r \geq 2$ the cohomology group 
$H^{\mathrm{ top}}(|\mathscr{C}(\Gamma_{r})|\jc{,\bQ})$, has rank $(r-1)!$, and, with its natural structure of $S_r$-module, is isomorphic to the  representation induced by a primitive character of a maximal cyclic subgroup.
\end{proposition}
To deduce this result let us introduce the simplical complex $\Nspan(\Gamma_r)$ \jcc{ of  non-spanning subsets of the graphic matroid of  $\Gamma_r$, namely the subsets of edges not containing a spanning tree}. Let us denote by  $\Flat(\Gamma_r)$ the lattice of partitions of $\{1,\dots,r\}$ which in the language of matroids correspond to the poset of flats of the cographic matroid, because a flat in this case is a partition into complete subgraphs.  To this lattice one attaches the simplicial complex $\Delta(\Flat(\Gamma_r))$ whose $k-$simplices are chains of partitions $p_{disc} < p_1 < \dots < p_k < p_{triv}$ where $p_{disc}$ is the discrete partition and $p_{triv}$ is the trivial partition.   

We need the following result which is a general fact on matroids: 
\begin{lemma}\label{lem:cographicflat}
Let $N={r \choose 2}$ denote the number of edges of $\Gamma_r$ We have natural isomorphisms
\begin{equation}
H^i(|\mathscr{C}(\Gamma_{r})|\jc{,\bQ}) \simeq H_{N-3-i}(|\Nspan(\Gamma_r)|\jc{,\bQ}) \simeq H_{N-3-i}(|\Delta(\Flat(\Gamma_r))|\jc{,\bQ}).
\end{equation}
The second isomorphism is $S_r$-equivariant, while in the first isomorphism the $S_r$-rep\-re\-sen\-ta\-tions differ by the sign character. 
\end{lemma}
\begin{proof}
Consider the boundary of the complex of all subsets of the edges of $\Gamma_r$. Its geometric realization is the boundary of an $N-1$-simplex, i.e.\ an $N-2$-dimensional sphere.

The first isomorphism is the content of {\cite[Exercise 7.43 on page 278]{bjorner}} and amounts to combinatorial Alexander duality, once one notices that $\mathscr{C}(\Gamma_{r})$ and $\Nspan(\Gamma_r)$ are Alexander dual complexes in $\partial \Delta^{N-1}$ (see \cite{bjotan} for a quick proof of Alexander duality which is adapted to this context).  We see that the isomorphism is twisted by the sign representation considering the action of $S_r$ on the top cohomology of the ambient sphere (\cite[Theorem 2.4]{Stanley}).

The second isomorphism, due to Folkman, is (\cite[Theorem 3.1]{folk}), using that the set of edges of $\Gamma_r$ forms a crosscut of the partition lattice. To see that this isomorphism is $S_r$-equivariant we briefly recall Folkman's argument. 

Note that for any edge $e$ the subcomplex $L_e$ of $\Delta(\Flat(\Gamma_r))$ formed by the simplices that are contained in a simplex that satisfies $p_1=e$ is contractible. Moreover, for any non-spanning subset $I$ of edges the intersection $\cap_{e\in I} L_e$ is contractible to the $0$-simplex given by the partition defined by the subgraph $I$ (see \cite[Section 3]{folk}).    

Thus the cohomology of $\Delta(\Flat(\Gamma_r))$ can be computed from the nerve of the covering given by the subcomplexes $L_e$ and this agrees with the cohomology of $|\Nspan(\Gamma_r)|$. 
\end{proof}
\begin{proof}(of Proposition \ref{repsym})
Applying the previous lemma, the computation reduces to the computation of the homology of the lattice of partitions which was determined in \cite[Theorem 7.3]{Stanley} to be the  representation induced by a primitive character of a maximal cyclic subgroup tensored with the sign representation. ({See \cite[Section 6]{MoPe} for a more detailed exposition of the argument.})
\end{proof}

The dual graph $\Gamma_{{\ven}}$ of a spectral curve in $S_{\ven}^\times$ contains a complete graph on the vertices, but it will have multiple edges between the vertices.

Let us therefore fix some notation. Given a graph $\Gamma$ and $I$ a subset of edges let us denote by $\widehat{\Gamma}_{I}$ the graph obtained by doubling the edges in $I$, i.e. for every edge $e\in I$ we add an edge $\widehat{e}$ connecting the same vertices as $e$. 
 
\begin{proposition}
Let $\Gamma $ be a graph, let $I$ be a non-empty subset of edges. Let $  |\mathscr{C}(\Gamma)|$ and $|\mathscr{C}(\widehat{\Gamma}_{I})|$ be the simplicial complexes associated to $\Gamma$ and $\widehat{\Gamma}_{I}$. 
Then, for every $\ell$, there is a canonical isomorphism
\begin{equation}
\jc{H^{\ell}}\left( |\mathscr{C}(\Gamma)|,\bQ \right)\simeq \jc{H^{\ell+|I|}} ( |\mathscr{C}(\widehat{\Gamma}_{I})|,\bQ ).
\end{equation}
If a finite group $G$ acts on $\Gamma$ preserving $I$, \jc{the action extends to $\widehat{\Gamma}_I$ and} the isomorphism is $G$--equivariant \jc{with respect to the induced actions}.
\end{proposition}

\begin{proof}
It is a direct application of the deletion-contraction sequence: Let us first assume that $I=\{e\}$ consists of a single edge.
Then the set of faces in $\mathscr{C}(\widehat{\Gamma}_{I})$ is the disjoint union of the set of those which contain a doubled edge $\widehat{e}$ and those who don't. The subcomplex of those faces not containing $\widehat{e}$ is the simplicial complex of the graph $\widehat{\Gamma}_{I}/{\widehat{e}}$ obtained by removing $\widehat{e}$ and collapsing the vertices joined by $\widehat{e}$. We therefore get an exact sequence of \jc{co}chain complexes (\jcc{we again denote the reduced cochain complex of ${\mathscr{C}(\widehat{\Gamma})}$ by $\widetilde{\cC^\bullet(\Gamma)}$)}
\jc{\begin{equation}
	0 \longrightarrow \widetilde{\cC^{\bullet-1}(\Gamma)}\longrightarrow  \widetilde{\cC^\bullet(\widehat{\Gamma}_{I})} \longrightarrow  \widetilde{\cC^\bullet(\widehat{\Gamma}_{I}/\widehat{e})}  \longrightarrow 0.
	\end{equation}
}
%
Note that the edge $e$ becomes a loop in the graph $\widehat{\Gamma}_{I}/{\widehat{e}}$, hence $|\mathscr{C}(\widehat{\Gamma}_{I}/\widehat{e})|$ is a cone and has vanishing \jc{reduced co}homology.

By induction this shows that the \jc{$G$--equivariant} morphism \jc{$\widetilde{\cC^{\bullet-|I|}(\Gamma)}\to \widetilde{\cC^\bullet(\widehat{\Gamma}_{I})}$} induced by mapping those faces in $\widehat{\Gamma}_{I}$ that contain all of the doubled edges to its intersection with $\Gamma$ induces an isomorphism in \jc{co}homology.  
\end{proof}

\jc{We will apply this to the graph $\Gamma_{\ven}$ which can be obtained from the complete graph on $r$ vertices, by successively doubling subsets of edges that are preserved by the subgroup of $S_r$ that preserves the partition $\ven$. Thus the representation of this subgroup on $H^{\mathrm{ top}}(|\mathscr{C}(\Gamma_{\ven})|,\bQ)$ is the restriction of the representation of $S_r$ on the corresponding group for the complete graph described in \cref{repsym}. Thus we find:}

\begin{corollary}\label{cor:rank_loc_syst}
Let $\ven= n_1 \geq n_2 \geq \cdots \geq n_r=1^{\alpha_1}\cdots n^{\alpha_n}$ be a partition of $n$. The rank of the local system $\cL_{{\delta^{\aff}(\ven)+i}, \ven}$ is 
\begin{equation}
\mathrm{rank}\,\cL_{{\delta^{\aff}(\ven)+i}, \ven}=
(r-1)!\binom{{2(\dim\an-\delta^{\aff}{(\ven)})}}{i}.
\end{equation}
The monodromy of the (isomorphic) local systems ${\cL_{\delta^{\aff}(\ven), \ven}}$ and 
${\cL_{{2\dim\an - \delta^{\aff}(\ven)}, \ven}}$ is given by the restriction to the subgroup 
$\prod_i S_{\alpha_i} \subseteq S_r$ 
of the representation of $S_r$ induced by a primitive character of a maximal cyclic subgroup. 
{In particular, if $\ven$ is a partition with pairwise distinct $n_i$ the monodromy of these sheaves is trivial, so that the corresponding summand of $\bR {\hn}_{*}\QQ$ contributes to the cohomology of $\mn$. }
\end{corollary}

\begin{remark}\label{rem:rank2}
If $n=2$ and $\Gamma_{\jc{(1,1)}}$ is the graph with two vertices joined by $2g-2$ edges, it is immediately seen that $ |\mathscr{C}({\Gamma }_{\jc{1,1}})|$ is a sphere of dimension $2g-4$. The corresponding representation is, for $g\neq 2$, the sign representation. \jc{Similarly f}or $g=2$ we have a zero-dimensional sphere, namely two points, and the relevant representation is th\jc{e sign representation} on {\em reduced} cohomology.

\jc{For $n=2$, $g\geq 2$ and the partition  $\un{n}=(1,1)$ the normalization $\widetilde{C_a}$ of a spectral curve $C_a$ in $S_{(1,1)}^\times$ is a disjoint union of two copies of $C$. Thus we have $H^1(\widetilde{C_a},\bQ)=H^1(C,\bQ)\oplus H^1(C,\bQ)$ and the monodromy of the corresponding system is the permutation representation induced from interchanging the components, i.e., the representation of $S_2$ is the sum of $2g$ trivial representations and $2g$ sign representations. Therefore for $0<j<4g$ the sign representation appears in $\bigwedge^jH^1(\widetilde{C_a},\bQ)$ and thus $\cL_{i,(1,1)}$ has non zero invariant sections for all $i$ satisfying $\delta^{\aff}+1\leq i \leq 2\dim \an -\delta^{\aff}-1 $. In particular these $\cL_{i,(1,1)}$ contribute non trivially to the cohomology of $\cM_2$.}
\end{remark}

\section{Appendix: The derivative of the Hitchin morphism is dual to the derivative of the action}\label{appe}

The duality statement from the title of the section is certainly known, but we could not find a reference for it. Although we only apply the result for the group $\GL_n$ it turns out that the proof is most easily explained in the more general setting of Higgs bundles for reductive groups. This is because in the case of $\GL_n$ it is easy to loose track of implicit identifications between the Lie algebra and its dual.

\subsection{Reminder on $G$-Higgs bundles}

We keep working over $\bC$ and use our fixed smooth projective curve $C$. In addition let $G$ be a connected reductive group with Lie algebra $\cg=\Lie(G)$. We will denote the dual of $\cg$ by $\cg^*$.

Given a $G$-torsor $\cP\to C$ and a representation {$\rho\colon G \to {\GL(V)}$} with $V$ a finite dimensional complex vector space, we will denote by $\cP(V):=\cP\times^G V$ the associated vector bundle.  

\jchange{Of course, if $G=\GL_n$, then the frame bundle $\cP= \Isom(\cO^n,\cE)$ of a vector bundle $\cE$ is a $\GL_n$-torsor and we get $\cE$ back by taking $V=\comp^n$ to be the standard representation. In this case $\cP(\cg)=\cEnd(\cE)\cong \cP(\cg^*)$.} 

A $G$-Higgs bundle on $C$ is a pair $(\cP,\phi)$ where $\cP\to C$ is $G$-torsor and $\phi \in H^0(C,\cP(\cg^*)\tensor K_C)$ is a global section of the coadjoint bundle twisted by $K_C$. We denote by 
$$ \Higgs_G := \left\langle (\cP,\phi) \,| \; \cP \in \Bun, \phi \in H^0(C,\cP(\cg^*)\tensor K_C) \right\rangle$$
the stack of $G$-Higgs bundles over $C$, which is the cotangent stack to the stack of $G$-bundles on $C$.

\begin{remark}
	The above definition follows the convention of \cite{BeilinsonDrinfeld}. In the literature on $G$-Higgs bundles it is also common to choose a $G$-invariant inner product $(,)$ on $\cg$ and use it to identify $\cg \cong \cg^*$. To state the results in an invariant form it seems to be most convenient to avoid this choice. As a consequence we will formulate some notions for the dual $\cg^*$ that are commonly used for $\cg$ for Higgs bundles, i.e., to use coadjoint orbits instead of adjoint orbits. 
\end{remark}

Let us recall from \cite{NgoHitchin} how to view $G$-Higgs bundles as sections of a morphism of stacks.

\begin{lemma}\label{Lem:NgoHiggsDescription} The category of Higgs bundles $(\cP,\phi)$ on $C$ is equivalent to the category of 2-commutative diagrams
$$\xymatrix{
& [\cg^*/G\times \bG_m]\ar[d]\\
C \ar[r]_-{K_C}\ar[ur]^-{(\cP,\phi)} & B\bG_m,
}$$
where $B\bG_m$ is the classifying stack of line bundles, $K_C$ is the map defined by the canonical bundle on $C$  and $[\cg^*/G\times \bG_m]$ is the quotient stack defined by the product of the coadjoint action of $G$ on $\cg^*$ and the standard scaling action of $\bG_m$ on the vector space $\cg^*$.
\end{lemma}
\begin{proof}
This is not hard to unravel: By definition a $G$-torsor on $C$ is the same as a map $C \to BG =[\Spec{\bC}/G]$, so the pair $\cP,K_C$ defines a map $C\to [B (G\times \bG_m)].$ Now for any representation $\rho \colon G\times \bG_m \to \GL(V)$ the associated bundle is the pull back of the morphism $[V/G\times\bG_m]$ and applying this to the representation on $\cg^*$ we see that $\cP(\cg^*)\tensor K_C = C\times_{B(G\times \bG_m)} [\cg^*/(G\times \bG_m)]$. Therefore the datum of a section of this bundle is equivalent to a section of 
$$\xymatrix{
	& [\cg^*/G\times \bG_m]\ar[d]\\
	C \ar[r]_-{(\cP,K_C)}\ar[ur]^-{(\cP,\phi)} & B(G\times \bG_m).
}$$
\end{proof}

\subsection{Deformations of $G$-Higgs bundles}
As the main aim of the section is to compare derivatives of morphisms from and to $\Higgs_G$ we need to recall the basic results on deformations of Higgs bundles.

To a Higgs bundle $(\cP,\phi)$ we attach the complex of vector bundles on $C$
	$$\cC(\cP,\phi):=[ \cP(\cg) \map{\ad^*(\underline{\;})(\phi)} \cP(\cg^*) \tensor K_C],$$
where $\ad^*\colon \cg \times \cg^* \to \cg^*$ denotes the coadjoint action of $\cg$ on $\cg^*$.

\begin{lemma}[\cite{Nitsure}]
The tangent space of the deformation functor of $G$-Higgs bundles at $(\cP,\phi)\in \Higgs_G$ is given by $H^1(C,\cC(\cP,\phi))$ and automorphisms of deformations that extend the identity of $(\cP,\phi)$ are given by $H^0(C,\cC(\cP,\phi))$.
\end{lemma}
\begin{proof}
	The deformation theory argument for the computation of the tangent space to $\Higgs_G$ can be found in \cite{Nitsure}. In the language of \cref{Lem:NgoHiggsDescription} we have a cartesian diagram:	
	$$\xymatrix{
		[\cg^* \tensor K_C/G]\ar[d]^{p_{K_C}}\ar[r] & [\cg^*/G\times \bG_m] \ar[d]^p\\
		C \ar[r]^-{K_C} & B\bG_m =[\Spec k/\bG_m]
	}$$ 
	and Higgs bundles are sections of the map $p_{K_C}$. 
	
	Now the tangent stack to any quotient stack $[X/G]$ can be described as the quotient of the complex of $G$-vector bundles $\Lie(G) \times X \to TX$ on $X$, which we think of a complex in degree $[-1,0]$. 
	
	Therefore the tangent complex to the stack $[\cg^*/G]$ (which lives in degree $[-1,0]$) is given by the $G$-equivariant complex 
	$$[\cg \map{\ad^*} \cg^*]$$
	on $\cg^*$ and thus the tangent complex to $p_{K_C}$ over $\cg^*\tensor K_C$ is given by $$[\cg\tensor \cO_C \to \cg^* \tensor K_C].$$ 
	
	Deformations of $(\cP,\phi)$ are deformations of the corresponding section $(\cP,\phi) \colon C\to [\cg^* \tensor K_C/G]$ and the pull back of the tangent complex at this section is $$[\cP(\cg)  \map{\ad^*(\underline{\;})(\phi)} \cP(\cg^*) \tensor K_C].$$ 
\end{proof}		

\begin{remark}
	For any Higgs bundle $(\cP,\phi)$ the complex $\cC(\cP,\phi)$ is self-dual with respect to the duality defined by $\cHom(\,\cdot\, , K_C[1])$.
	Therefore Serre-duality induces pairings 
	$$H^i(C,\cC(\cP,\phi)) \times H^{2-i}(C,\cC(\cP,\phi)) \to \bC$$
	that for $i=1$ define the standard 2-form $\omega_{\Higgs}$ on $\Higgs_G=T^*\Bun$.
\end{remark}

\subsection{The Hitchin morphism}

The Hitchin morphism for $G$-Higgs bundles is defined as follows. Denote by $\chi$ the quotient map $$\chi\colon \cg^* \to \cg^*/\!/G =\car^*,$$ where $\car^*=\Spec (\Sym^\bullet \cg)^G$. 

\begin{remark}
	As usual, a choice of homogeneous invariant polynomials would give an isomorphism $\car^* \cong \Spec k[f_1,\dots,f_r] \cong \bA^r$ identifying $\car^*$ with an affine space. The map $\chi$ is equivariant with respect to the $\bG_m$ action on $\cg^*$ and the induced action on $\Spec (\Sym^\bullet \cg)^G$, whose weights are given by the degrees of the invariant polynomials $f_i$.
\end{remark}

We denote by $\car^*_{K_C}=(\cg^*\times K_C/\!/G) \to C$ the corresponding affine bundle and by 
$$\cA_G := H^0 (C, \car^*_{K_C})$$
the base of the Hitchin morphism. Again, any choice of invariant polynomials for $G$ defines an isomorphism $\cA_G \cong \oplus_i H^0(C,K_C^i)$, but it will be more convenient to avoid such a choice.

The map $\overline{\chi}\colon[\cg^*/G\times \bG_m] \to [\car^*/\bG_m]$ then induces a map
$$h_G\colon \Higgs_G \to \cA_{{G}}= H^0(C,\car^*_{K_C}),$$
which is often denoted as $h_G(\cP,\phi)=:\chi(\phi)$.

\subsection{The regular centralizer (local version)}

To define the Hitchin morphism and the analog of the action of the Jacobian of the spectral curve we now recall the construction of the regular centralizer groups from \cite{NgoHitchin}. 

Let us fix the standard notations. The group $G$ acts on $\cg$ via the adjoint action, which we will denote by $\Ad\colon G \to \GL(\cg)$, the derivative of this action is denoted $\ad\colon \cg \to \End(\cg)$. Similarly $\Ad^*\colon G \to \GL(\cg^*)$ denotes the dual action given by $\Ad^*(g)(\phi)(\underline{\quad}):=\phi(\Ad(g)^{-1}.\underline{\quad})$, so that its derivative is ${\ad}^*(X)=-\ad(X)^t$.  

For an element $\varphi\in \cg^*$ we denote its centralizer in $G$ by 
$C(\varphi):=\{ g\in G | \Ad^*(G)(\varphi)=\varphi\}$ and by $\cg^{\varphi}:=\{A\in \cg | \ad^*(A)(\varphi)=0\}$ its Lie algebra. The groups $C_G(\varphi)$ define a group scheme 
$$C_{\cg^*} :=\{(g,\varphi)\in G\times \cg^*| \Ad^*(g)(\varphi)=\varphi\} \to \cg^*$$ over $\cg^*$. The set of regular elements $\cg^{*,\reg}\subset \cg^*$ is defined to be the subset of those elements for which $\dim C_G(\varphi)=\rank(G)$ is minimal.

The restriction $C_{\cg{*,reg}}$ of $C_{\cg^*}$ to the space of regular elements descends to a group scheme $J_{\car^*}$ on $\car^*=\cg^*/\!/ G$, called the regular centralizer. The group scheme $J_{\car^*}$ comes equipped with a natural map 
$$m\colon \chi^*J_{\car^*} \to C_{\cg^*} \subset G\times \cg^*$$ 
which is defined to be the unique regular map extending the natural isomorphism $\chi^*J_{\car^*}|_{\cg^{*,\reg}} \cong C_{\cg^{*,reg}}$. We denote by $dm$ the induced map on Lie algebras $$dm \colon \chi^*Lie(J_{\car^*}) \to \Lie(C_{\cg^*} )\to  \cg \times \cg^*.$$

\begin{notation}
	As in \cite{NgoHitchin} we will need to keep track of the action of the multiplicative group $\bG_m$ on our objects. We will denote by $\bC(n)$ the one dimensional vector space with the $\bG_m$ action given by the $n$-th power of the standard action. For any vector bundle $E$ with a $\bG_m$-action we will denote by $E(n):=E\tensor \bC(n)$.
\end{notation}

\begin{remark}\label{rem:dm}
	On $\cg^*$ the group $\bG_m$ acts by scalar multiplication which induces an action on $\car^*=\cg^*/\!/G$. The action on $\cg^*$ also preserves centralizers and thus induces an action on $C_{{\cg^*}}$, given by $$t.(g,\varphi):=(g,t\varphi).$$ In particular this action preserves $\cg^{*,\reg}$ and thus $C_{\cg^{*,\reg}}$ even descends to a group $\overline{J}$ over $[\car^*/\bG_m]$.
	
	Note that the formula for the $\bG_m$ action shows that the derivative
	\begin{equation*}
dm\colon \chi^*Lie(J_{\car^*}) \to  \cg \times \cg^*
	\end{equation*} is equivariant for the $\bG_m$--action that on $\cg \times \cg^*$ is given by the trivial action on the first factor $\cg$ and the standard action on the second factor $\cg^*$. Therefore, identifying $\cg(-1) \times \cg^* \cong T^*\cg^*$ we can interpret $dm$ as a morphism
	\begin{equation}\label{eq:dm}
	dm\colon \chi^*Lie(J_{\car^*})(-1) \to  T^*\cg^* 
	\end{equation}
	The restriction of this map to $\cg^{*,\reg}$ is injective, as $m$ was injective over $\cg^{*,\reg}$.
\end{remark}

\begin{remark}\label{rem:dchi}	
	The map $\chi\colon \cg^{*} \to \car^*$ is by definition $G$-invariant and equivariant with respect to the $\bG_m$ action, therefore its derivative
	\begin{equation}\label{eq:dchi}
	d\chi \colon \cg^* \times \cg^* = T\cg^* \to \chi^* T\car^*
	\end{equation}
	is also equivariant with respect to the induced $\bG_m$ action and the restriction 
	$$d\chi|_{\cg^{*,\reg}} \colon T \cg^{*,\reg} = \cg^* \times \cg^{*,\reg} \to \chi^* T\car^*|_{\cg^{*,\reg}}$$ is surjective, because the map $\chi\colon \cg^{*} \to \car^*$ admits a section $\kappa\colon \car^* \to \cg^{*,\reg}\subset \cg^*$ called the Kostant section.		
\end{remark}

The following observation is the group theoretic origin of the duality result for the Hitchin fibration.
\begin{lemma}\label{lem:LocalPairing}
	The canonical pairing $$\langle \,, \, \rangle \colon T\cg^* \times_{\cg^*} T^*\cg^* \to \bC$$
	induces a $G\times\bG_m$-equivariant perfect pairing 
    $$\chi^*Lie(J_{\car^*})|_{\cg^{*,\reg}}(-1) \times_{\cg^*}  \chi^* T\car^*|_{\cg^{*,\reg}} \to \bC(0)$$
    and thereby an isomorphism
   $$\Lie(J)^*(1) \cong T\car^*.$$    
\end{lemma}
\begin{proof}
	From {Remarks \ref{rem:dm} and \ref{rem:dchi}} we know that $\chi^*Lie(J_{\car^*})|_{\cg^{*,\reg}}(-1)$ is a subbundle of $T^*\cg^{*,\reg}$ and $\chi^* T\car^*|_{\cg^{*,\reg}}$ is a quotient of $T\cg^{*,\reg}$ and both have the same dimension.
	
	As the map $\chi$ is constant on $G$-orbits, the tangent space to a $G$ orbit is in the kernel of $d\chi$, i.e., for every $\varphi\in \cg^*$
	$$ V_\varphi:=\im( \cg \map{\ad^*(\underline{\;})(\varphi)} T_\varphi\cg^* =\cg^* )\subset \ker(d\chi).$$
	If $\varphi\in\cg^{*,\reg}$ is regular we have $\dim V_\varphi = \dim \cg/\cg^\varphi = \dim \cg - \dim \car^*$. As $d\chi$ is surjective in this case we find $V_\varphi=\ker(d\chi)$ for $\varphi\in \cg^{*,\reg}$. 
	
	Now $G$-invariance of the pairing $\langle \,, \, \rangle$ i.e., $\langle g.\varphi,g.A\rangle=\langle\varphi,A\rangle$ for all $g\in G,\varphi\in \cg^*,A\in\cg$ implies that for all $X\in\cg$ we have
	$$\langle \ad^*(X)(\varphi),A\rangle=\langle\varphi,-\ad(X)(A)\rangle=-\langle\ad^*(A)(\varphi),X\rangle.$$ 
	This implies that $V_\varphi^\perp=\cg^\varphi$ and this implies our claim. 
\end{proof}

\begin{remark}\label{LocalComputationForGLn}
	For $G=\GL_n$ the above can be rephrased in terms of coordinates. In this case $\car^*\cong \bA^n$ is the space of characteristic polynomials of matrices. In order to compute the differential $d\chi$ of the map $\chi \colon \gl_n \to \car^*$ it is convenient to choose the coordinates $\chi(\varphi):=(\frac{1}{i}\Trace(\varphi^i))_{i=1\dots n}$. Then $d\chi_\varphi\colon \gl_n \to k^n$ is given by $X \mapsto (\Trace(\varphi^{i-1}X))_{i=1 \dots n}$.
	
	The regular centralizer group scheme can also be described explicitly: For any monic polynomial $p(t)\in k[t]$ we define $J_p:=(k[t]/p(t))^*$ as the unit group of the algebra $k[t]/p(t)$, which defines an $n$-dimensional commutative group scheme $J$ over $\bA^n$. As a matrix $\varphi$ is regular if and only if its characteristic polynomial $p_\varphi$ is its minimal polynomial, we see that the assignment $J_{p_\varphi} \to \GL_n$ given by $f(t) \mapsto f(\varphi)$ is injective for regular matrices $\varphi$ and therefore identifies $J_{p_\varphi}$ with the centralizer of $\varphi$. By definition of the regular centralizer the map $\chi^*(J)\to I \subset \GL_n \times \gl_n$ is given by the unique extension of the canonical map on $\gl^{\reg}$. As the formula $f(t)\mapsto f(\varphi)$ is well defined for all $\varphi$ it gives this extension.
	
	We also observe that $s \in \bG_m$ acts on $\car^*$ by $p\mapsto s.p$, where $s.p$ is the polynomial given by multiplying the coefficient of $t^{n-i}$ by $s^i$. This lifts to an action $J_p \to J_{s.p}$, given by $t\mapsto st$ and this is compatible with the above map $f(t) \mapsto f(\varphi)$. 
	
	Note that $\Lie(J_p)\cong k[t]/(p(t))$ (as $1+\epsilon f(t)$ is an invertible element of $k[\epsilon,t]/(\epsilon^2,p(t))$ for all $f$). Finally the standard basis $1,t,\dots t^{n-1}$ of $k[t]/p(t)$ defines an isomorphism $\Lie(J) \cong k^n \times \car$.
	
	Thus for any $\varphi$ the map $dm \colon k^n \cong \Lie(J_p) \to \gl_n$ is given by $(a_i)\mapsto \sum_{i=0}^{n-1} a_i \varphi^i$. 
	
	Finally, we use the pairing $(A,B):=\Trace(AB)$ on $\gl_n$. With respect to this form the dual of the map $k^n \cong \Lie(J_p) \to \gl_n$ is therefore given by $$X \mapsto (\Trace(\varphi^iX))_{i=0\dots n-1}$$ which is $d\chi_\varphi$.	
\end{remark}

\begin{remark}\label{rem:LocalVersionOfDuality}
	We can reformulate the above Lemma as a duality statement on $[\cg^*/G]$: As for any quotient stack, the tangent stack to this quotient is defined by the complex 
	$$ [\cg \times \cg^* \map{(A,\phi) \mapsto (\ad^*(A)(\phi),\phi)} T\cg^* = \cg^* \times \cg^*],$$
	i.e., the quotient stack of these bundles is the pull-back of the tangent stack to $\cg^*$. This complex is self-dual up to a shift by $1$.
	
	Considering $\chi$ as a morphism $\overline{\chi}\colon [\cg^*/G] \to \car^*$ the differential becomes the morphism 
	$$  [\cg \times \cg^* \map{(A,\phi) \mapsto (\ad^*(A)(\phi),\phi)} \cg^* \times \cg^*] \map{(0,d\chi)} [ 0 \to T\car^*].$$
	
	Similarly as the morphism $\chi^*J \to I$ is $G$-equivariant (because $J$ was defined by descending $I|_{\cg^{*,\reg}}$), the morphism $dm$ defines a $G$-equivariant morphism $$[\chi^*\Lie(J) \to 0] \map{(dm,0)} [\cg \times \cg^* \map{(A,\phi) \mapsto (\ad^*(A)(\phi),\phi)} \cg^* \times \cg^*].$$
	
	 \cref{lem:LocalPairing} says, that these morphisms are $\bG_m$-equivariantly dual to each up to a shift by $1$ of the complex and twisting the action by $(1)$.
\end{remark}
\subsection{The regular centralizer (global version)}

Let us recall the global version of the regular centralizer as explained in \cite[Section 4]{NgoHitchin}: We saw that the regular centralizer defines a group scheme $\overline{J}$ on $[\car^*/\bG_m]$, that we can pull back to a group scheme $J_{\car^*_{K_C}}$ on $\car^*_{K_C}=C \times_{B\bG_m} [\car^*/\bG_m]$ which we pull back via the tautological map $\cA_G\times C\to \car^*_{K_C}$ to define a group scheme $J_{\cA_G}$ on $\cA_G\times C$. 

Similarly, the pull back of the sheaf of centralizers $\overline{C}_{\cg^*}$ on $[\cg^*/G\times \bG_m]$ under the classifying map $\Higgs_G \times C \to [\cg^*/G\times \bG_m]$ is denoted $C_{\Higgs\times C}$. By construction $C_{\Higgs\times C}=\Aut(\cE_{\univ},\phi_{\univ})$ is identified with the group of $G$-automorphisms of the universal Higgs bundle $(\cE_{\univ},\phi_{\univ})$ that preserve $\phi_{\univ}$.

The map $\chi^*J \to C_{\cg^*}$ therefore induces a natural morphism $$\iota\colon(h\times id_C)^*J_{\cA} \to \Aut(\cE_{\univ},\phi_{\univ}).$$ 

Over $\cA_G$ one defines the group scheme $P_{\cA_G}$ of $J_{\cA_G}$-torsors on $C$, i.e., at a point $a\colon C \to \car^*_{\Omega}$ is given by the torsors of the group scheme of $a^*J_{\cA_G}$ on $C$.  Then $\iota$ induces an action $\act\colon P_{\cA_G}\times_{\cA_G} \Higgs_G \to \Higgs_G$.

\subsection{The duality statement}
We can now formulate the main result of this section:
\begin{proposition}\label{prop:dactdhdual}
	There exists a canonical isomorphism $\Lie(P_{\cA_G}/\cA_G) \cong T^*\cA_G$ such that the morphisms 
	$$d\act\colon h^*\Lie (P_{\cA_G}/\cA_G) \to T\Higgs_G$$
	and $$dh\colon T\Higgs_G \to h^*(T\cA_G)$$
	become dual to each other with respect to the symplectic form $\omega_{\Higgs}$.
\end{proposition}

\begin{proof}
	The result follows from the local statement  \cref{lem:LocalPairing} as follows:
	The regular centralizers $P_a$ are defined to be $a^*J_{\cA}$-torsors, so $\Lie(P_a)=H^1(C,\Lie(a^*J_{\cA_G}))$. 
	
	For any Higgs bundle $(\cE,\phi)$ the action of $\act_{(\cE,\phi)}\colon \Lie(P_a) \to h^{-1}(a)$ is induced from $\iota\colon(h\times id_C)^*J_{\cA_G} \to C_{(\cE,\phi)}\subset \Aut(\cE/C)$. 
	
	Therefore applying $\Lie$ we find that the differential of the action is induced from the morphism of complexes 
	$$ [\Lie(a^*J_{\cA}) \to 0 ] \map{(d\act,0)} [ \ad(\cE) \map{\ad^*()(\phi)} \ad(\cE)^* \tensor \Omega]$$
	after passing to $H^1$.
	
	
	By  \cref{lem:LocalPairing} we know that this map of complexes is up to tensoring with $K_C[-1]$ this map is dual to the map
	$$[ \ad(\cE) \map{\ad^*(\underline{\;})(\phi)} \ad(\cE)^* \tensor K_C] \map{(0,d\chi)} [0 \to T\car^*_{K_C}]$$
	that induces $dh$ by  \cref{rem:LocalVersionOfDuality}. Therefore applying Serre-duality to $H^1$ of the above complexes we obtain the proposition. 
\end{proof}


\begin{thebibliography}{10}

\bibitem{BNR} A.~Beauville, M.S.~Narasimhan, S.~Ramanan, {\em Spectral curves and the generalised theta divisor.}
J. Reine Angew. Math. {\bf 398} (1989), 169--179. 

\bibitem{bbd} A.~Beilinson, I.N.~Bernstein, P.~Deligne, O.~Gabber, {\em Faisceaux Pervers}, Ast\'{e}risque {\bf 100}, second printing (2018).

\bibitem{BeilinsonDrinfeld}
A.~Beilinson, V.~Drinfel'd.
\newblock {\em Quantization of Hitchin's integrable system and Hecke eigensheaves.}
\newblock Preprint, available under {\tt http://www.ma.utexas.edu/benzvi/}

\bibitem{bjorner}A.~Bj\"orner, {\em The Homology and Shellability of Matroids and Geometric Lattices}, in N. White (ed.), {\em Matroid applications.}
Encyclopedia of Mathematics and its Applications, 40. Cambridge University Press, Cambridge, 1992. xii+363 pp.

\bibitem{bjotan}
A. Bj\"orner, M. Tancer, {\em
Note: Combinatorial Alexander duality a short and elementary proof.}
Discrete Comput. Geom. 42 (2009), no. {\bf 4}, 586--593. 

\bibitem{BM} W.~Borho, R.~MacPherson, {\em Partial resolutions of nilpotent varieties}, Ast\'erisque {\bf 101-102} (1983), 23--74.

\bibitem{BLR} {S.~Bosch, W.~L\"{u}tkebohmert, M.~Raynaud, {\em N\'{e}ron models},
Ergebnisse der Mathematik und ihrer Grenzgebiete, {\bf 21}, Springer-Verlag, Berlin, (1990)}

\bibitem{cks} E.~Cattani, A.~Kaplan, W.~Schmid, {\em $L^2$ and intersection cohomology for a polarized variation of Hodge structure}, Invent. Math. {\bf 87} (1987), 217--252.

\bibitem{ch-la} P.-H.~Chaudouard, G.~Laumon, {\em Un th\'eor\`eme du support pour la fibration de Hitchin},
 Ann. Inst. Fourier (Grenoble) {\bf 66} (2016), no. 2, 711--727. 

\bibitem{deca} M.A.~de Cataldo {\em A support theorem for the Hitchin fibration: the case of $\text{SL}_n$} Compos.Math. {\bf 153} 1316--1347.

\bibitem{dCHM} M.A. de Cataldo, T. Hausel, L. Migliorini, {\em Topology of Hitchin systems and
Hodge theory of character varieties:
the case $A_1$,}  Annals of Math. {\bf 173} (2012), 1329--1407.

\bibitem{dCM} M.A.~de Cataldo, L.~Migliorini, {\em The 
Hard Lefschetz 
Theorem and the topology of semismall maps,}
Ann. Sci.  \'ENS  ($4^e$ s\'erie) {\bf 35} (2002), 759--772.

\bibitem{drs} M.A.~de Cataldo, A. Rapagnetta,  G. Sacc\`a, {\em The Hodge numbers of O'Grady 10 via Ng\^o strings}, arXiv:1905.03217.

\bibitem{DM} P.~Deligne, D.~Mumford, {\em The irreducibility of the space of curves of given genus,} Inst. Hautes \'Etudes Sci. Publ. Math. No. {\bf 36} (1969), 75--109.

\bibitem{sga3.2.2}
M.~Demazure, A.~Grothendieck, {\em S\'eminaire de G\'eom\'etrie Alg\'ebrique, Sch\'emas en Groupes, II}, Lecture notes in math. 152, Springer-Verlag, Heidelberg (1970).

\bibitem{EstevesTAMS} E.~Esteves, {\em Compactifying the relative Jacobian over families of reduced curves} Trans. Amer. Math. Soc. {\bf 353} (2001).

\bibitem{Faltings} {G.~Faltings, {\em Stable G-bundles and projective connections}
J. Algebraic Geom. {\bf 2} (1993), no. 3, 507--568.}

\bibitem{folk} J.~Folk, {\em The homology groups of a lattice} J. of Math. Mech. {\bf 15} (1966)  631--636.

\bibitem{Heinloth} \jchange{J.~Heinloth, {\em A conjecture of {H}ausel on the moduli space of {H}iggs
	bundles on a curve} Ast\'{e}risque {\bf 370} (2015), 157--175.}

\bibitem{Hitchin} N.~Hitchin, {\em Stable bundles and integrable systems.}
Duke Math. J. {\bf 54} (1987), no. 1, 91--114. 

\bibitem{Illusie}{L.~Illusie, {\em Complexe cotangent et d\'{e}formations. {I}}, Lecture Notes in Mathematics, Vol. {\bf 239}, Springer-Verlag, Berlin-New York (1971)}

\bibitem{KK}M.~Kashiwara, T.~Kawai, {\em The Poincar\'e Lemma for variations of Polarized Hodge structures } Publ. RIMS {\bf 23} (1987), 345--407.

\bibitem{MRV1} M.~Melo, A.~Rapagnetta, and F.~Viviani, {\em Fine compactified Jacobians of reduced curves}, Trans. Amer. Math. Soc.
{\bf 369}, (2017), 5341--5402.

\bibitem{MeloViviani} M.~Melo, F.~Viviani, {\em Fine compactified Jacobians}, Math. Nach. {\bf 285} (2012), no. 8--9, 997--1031

\bibitem{MiglioriniSupportTheorems} {L.~Migliorini,
{\em Support theorems for algebraic maps.} Milan J. Math. {\bf 83} (2015), no. 1, 21--45.}

\bibitem{MS} L.~Migliorini, V.~Shende, {\em Higher discriminants and the topology of algebraic maps}, Algebr. Geom. {\bf 5} (2018), no. 1, 114--130 

\bibitem{MSV} L.~Migliorini  V.~Shende, F.~Viviani, {\em A support theorem for Hilbert schemes of planar curves II}.
Compos. Math.{\bf 157} (2021), no.4,  835-882. 

\bibitem{MoPe} L.~Moci, G.M.~Pezzoli, {\em Representations of automorphism groups on the homology of matroids}, 
Eur. J. Comb., {\bf 94}, (2021)  1--17 

\bibitem{NgoHitchin}
	B.C.~Ng\^o,
	\newblock {\em Fibration de Hitchin et endoscopie.}
	\newblock {\rm Invent. Math.} \bf 164 \rm (2006), 399--4539. 
	
\bibitem{NgoLEmme}	B.C.~Ng\^o,
	\newblock {\em Le lemme fondamental pour les alg\`ebres de Lie.}
	\newblock {\rm Publ. Math. Inst. Hautes \'Etudes Sci.} No. \bf 111 \rm (2010), 1--169. 
	
\bibitem{ngoaf} B.C.~Ng\^o, {\em Decomposition theorem and abelian fibration.}
On the stabilization of the trace formula, 253-264, 
Stab. Trace Formula Shimura Var. Arith. Appl., 1, Int. Press, Somerville, MA, 2011. 


\bibitem{white} G.~Nicoletti, N.~White,  {\em Axiom Systems.}
In N. White (Ed.), Theory of Matroids (Encyclopedia of Mathematics and its Applications, pp. 29--44). Cambridge (1986).
	
\bibitem{Nitsure} N.~Nitsure,
	\newblock {\em Moduli space of semistable pairs on a curve.}
	\newblock {\rm Proc.\ London Math.\ Soc.} (3) \bf 62 \rm(1991), no. 2, 275--300.

\bibitem{Schaub} D.~Schaub, {\em Courbes spectrales et compactifications de jacobiennes.}  Math. Z. {\bf 227} (1998), no. 2, 295--312.

\bibitem{Schmid} W.~Schmid, {\em Variations of Hodge structures: singularities of the period mapping.} Invent. Math. {\bf 22} (1973) 211--319.

\bibitem{Steenbrink} J.~Steenbrink, {\em Limits of Hodge structures.} Invent. Math. {\bf 31} (1976) 229--257.


\bibitem{Stanley} R.~Stanley, {\em Some aspects of groups acting on finite posets}. Journ. of Combin. Theory series A {\bf 32},132--161 (1982)


\bibitem{TalpoVistoli} {M.~Talpo, A.~Vistoli, {\em Deformation theory from the point of view of fibered
		categories}, {Handbook of moduli. {V}ol. {III}}, {Adv. Lect. Math.}, {\bf 26}, 281--397,
	{Int. Press, Somerville, MA} (2013)}

\bibitem{stacksproject} The Stacks Project Authors, {\em Stacks Project.} \href{http://stacks.math.columbia.edu}{http://stacks.math.columbia.edu}

\bibitem{VerdierStratifications} {J.-L.~Verdier, {\em Stratifications de Whitney et théorème de Bertini-Sard.}
Invent. Math. {\bf 36} (1976), 295--312.}

\end{thebibliography}
\end{document}